\theoremstyle{plain}
\newtheorem{theorem}{Theorem}[section]
\newtheorem{lemma}[theorem]{Lemma}
\newtheorem{proposition}[theorem]{Proposition}
\newtheorem{corollary}[theorem]{Corollary}
\theoremstyle{definition}
\newtheorem{definition}[theorem]{Definition}
\theoremstyle{remark}
\numberwithin{equation}{section}
\DeclareMathOperator{\fin}{fin}
\DeclareMathOperator{\triv}{triv}
\DeclareMathOperator{\dom}{dom}
\DeclareMathOperator{\ran}{range}
\DeclareMathOperator{\val}{val}
\newcommand{\tie}[1]
{\,\raise-5pt\hbox{
${\buildrel{\displaystyle{\rhd}\!\!{\lhd}}\over{
\scriptstyle
#1}}$}\,}
\newcommand{\card}[1]{\lvert #1 \rvert}
\newcommand{\forces}[2]{\Vdash_{#1} \mbox{``} #2 \mbox{''}}
\newcommand{\Poset}{{\mathbb P}}
\newcommand{\Naturals}{{\mathbb N}}
\newcommand{\pomega}{{\mathcal P}(\Naturals)}
\newcommand{\pomegaf}{{\mathcal P}(\Naturals)/\fin}
\title{
Non-trivial copies of $\mathbb N^*$
}
\author[A. Dow]{Alan Dow}
\address{Department of Mathematics and Statistics, UNC Charlotte}
\date{\today}
\keywords{homomorphism, Stone-Cech, PFA, forcing}
\subjclass{03A35}
\begin{document}
\begin{abstract}
We show that it is consistent to have regular closed non-clopen
copies of
 $\mathbb N^*$ within $\mathbb N^*$ and a non-trivial self-map
of $\mathbb N^*$
 even if 
 all autohomeomorphisms of $\mathbb N^*$
are trivial. 
\end{abstract}
\bibliographystyle{plain}

\maketitle

\section{Introduction} 

In this paper we are interested in the possible existence of regular closed
subsets  of $\mathbb N^*$ that are
non-trivial copies
of $\mathbb N^*$, i.e. that are not clopen.  A proper subspace $K\subset \mathbb
N^*$ is said to be a trivial copy if there is an embedding of $\beta
\mathbb N$ into $\beta \mathbb N$ which sends the remainder $\mathbb
N^*$  
onto $K$. More generally, a function  $h\in \mathbb N^{\mathbb N}$ is said to 
{\em induce\/} a function $F:\mathcal P(\mathbb N)
\rightarrow \mathcal P(\mathbb N)$ 
 on $I\subset \mathbb N$
  if $F(a) =^* h^{-1}(a) = \{ n\in \mathbb N:  h(n)  \in a\}$ for all $a\subset I$. 
The function $F$ 
is said to be {\em trivial\/} on $I$ if there is a such a 
function $h$. 
We use $\triv(F)$ to denote the ideal of sets
on which $F$ is trivial.
Such a function $F$ is usually a \textit{lifting\/} of a homomorphism
 $\psi :\mathcal P(\mathbb N)/\fin\rightarrow
  \mathcal P(\mathbb N)/\fin$ in the sense that
    $F(a)/\fin = \psi(a/\fin)$ for all
     $a\subset \mathbb N$.  The ideal $\triv(\psi)$ would coincide
     with that of $\triv(F)$ for any such lifting of $\psi$.

 It was shown in \cite{DowPAMS} that there are
non-trivial nowhere dense copies of $\mathbb N^*$ while
Farah \cite{Farah00} proved that PFA implies
that if $K\subset\mathbb N^*$ is homeomorphic
to $\mathbb N^*$, then there is a, possibly empty, clopen
subset $A$ of $\mathbb N^*$ such that $A\subset K$
and $K\setminus A$ is nowhere dense.

Velickovic \cite{veli.oca} introduced the poset   $\Poset_2$ 
which was created to produce a model of Martin's Axiom
in which $\mathfrak c=\aleph_2$ and in
which there are  non-trivial autohomeomorphisms on $\mathbb N^*$.
 This was achieved by forcing over a model of PFA.
Several variations of $\Poset_2$ are possible and we continue the 
study of the properties of $\mathbb N^*$ that hold in 
the model(s) obtained when forcing with $\Poset_2$ (and its variants) 
over a model of PFA (see also \cite{step.28,step.29,DSh2}).

Farah \cite{Farah00}
 defines the important
 notion of an ideal of $\mathcal P(\mathbb N)$ 
being ccc over fin to mean that
there is no uncountable almost disjoint family of subsets of $\mathbb
N$ none of which are in the ideal. By Stone duality, we define a
closed subset $K$ of $\mathbb N^*$ to be ccc over fin if there is
no  uncountable family of pairwise disjoint clopen subsets of
$\mathbb N^*$ each meeting $K$ in a non-empty set. It is shown in 
\cite{Farah00} that
PFA implies that for each homomorphism $\psi$ from $\mathcal P(\mathbb
N)/\fin$ onto $\mathcal P(\mathbb N)/\fin$, which is the Stone dual
of a  copy of $\mathbb N^*$ in $\mathbb N^*$,
  $\triv(\psi)$ is ccc
over fin. 
One of our main results is that  this is true of automorphisms $\psi$
 in the
forcing extension by $\Poset_2$ over a model of PFA.  
It was already shown in \cite{step.28} that 
$\triv(\psi)$ is a dense $P$-ideal in such models.

\section{Preliminaries }

Now we recall
 the  partial order $\Poset_2$ from \cite{veli.oca}.

\begin{definition}
    The\label{poset} 
partial order $\Poset_2$ is defined to consist of all 1-to-1
    functions 
$f$  where 
\begin{enumerate}
\item $\dom(f) = \ran(f)\subset \mathbb N$,
\item for all $i\in\dom(f)$ and $n\in\omega$, $f(i)\in
  [2^n,2^{n+1})$ if and only if 
 $i\in [2^n,2^{n+1})$
  \item $\limsup_{n\rightarrow\infty}\card{[2^n,2^{n+1})\setminus
\dom(f)} = \infty$\label{growth}
\item for all $i\in \dom(f)$, $i=f^2(i)\neq f(i)$.
\end{enumerate} 
The ordering on $\Poset_2$ is $\subseteq^{\ast}$.
\end{definition}

\begin{definition}
 The poset $\Poset_1$ is defined to consist of all $\{0,1\}$-valued
 partial functions $f$ such that 
 $\dom(f)  \subset \mathbb N$
 and   $\limsup_{n\rightarrow\infty}\card{[2^n,2^{n+1})\setminus
\dom(f)} = \infty$. 
The ordering on $\Poset_1$ is $\subseteq^{\ast}$.

The poset $\Poset_0$ is the subposet of $\Poset_1$ consisting
of those $f\in \Poset_1$ satisfying that
for all $n\in
\omega$, $f^{-1}(1)\cap [2^n, 2^{n+1})$ has size at most 1, and it
is non-empty if and only if  $[2^n, 2^{n+1})\subset \dom(f)$.
 
\end{definition}

Each of these posets introduces a new generic ultrafilter $\mathcal U$
which is a tie-point of $\mathbb N^*$ 
(as introduced in \cite{DTech1} $A \tie{\mathcal U}B$,
see also \cite{DSh1,DSh2}):
namely there is a cover by regular closed subsets $A,B$ satisfying
that $A\cap B = \{\mathcal U\}$.  For a regular closed set
$A$ of $\mathbb N^*$, we let $\mathcal I_A$ denote the ideal
 $\{ a\subset \mathbb N: a^* \subset A\}$.
  Let $G$ denote a
$\Poset_2$-generic filter. It is shown in \cite{veli.oca}, that the
collection $\mathcal U = \{ \mathbb N\setminus \dom(f) : f\in G\}$ is
an ultrafilter. This is also true for the posets $\Poset_0$ and $\Poset_1$.
In the cases of $G$ being generic for either of
$\Poset_0$ and $\Poset_1$, $\mathcal I_A $ would be
$\{ f^{-1}(1) : f\in G\}$, while, for $\Poset_2$, 
$\mathcal I_A = \{ \{ i\in \dom(f) : i < f(i)\} : f\in G\}$. 
This is discussed in \cite{DSh2}. One of our main motivations is to
discover if $A$ or $B$ can be homeomorphic to $\mathbb N^*$ as this
information can be quite useful in applications (again, see
\cite{DSh2}). 

If PFA holds, then each of $\Poset_0, \Poset_1, \Poset_2 $ is
$\aleph_1$-closed and $\aleph_2$-distributive (see
\cite[p.4226]{step.29}).  In this paper we will restrict our study to
forcing with these posets individually, but the reader is referred to
\cite{step.29} for the method to generalize to countable support
infinite products.  In particular, the result that $\triv(F)$ is ccc
over fin for all onto homomorphisms $F$ on $\mathcal P(\mathbb N)/\fin$
will hold in these more general models but we will only prove this for 
automorphisms in this paper.

\begin{theorem} If $G$ is $\Poset_0$-generic and $A\tie{\mathcal U}B$
  are as defined above, then there is a homeomorphism from 
  $A$ to $\mathbb N^*$ which extends to a continuous
  mapping with domain all of $\mathbb N^*$. 
\end{theorem}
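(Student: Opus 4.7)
The plan is to use the interval-collapse map $\pi:\mathbb N\to\mathbb N$ defined by $\pi(k)=n$ for $k\in I_n:=[2^n,2^{n+1})$. Since $\pi$ is finite-to-one, its Stone--\v{C}ech extension restricts to a continuous self-map $\pi^*:\mathbb N^*\to\mathbb N^*$; this $\pi^*$ is already the ``continuous mapping with domain all of $\mathbb N^*$'' required by the theorem, so it suffices to show that $\pi^*$ restricted to $A$ is a bijection onto $\mathbb N^*$. Since $A$ is closed in the compact Hausdorff space $\mathbb N^*$, such a continuous bijection will automatically be a homeomorphism.

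I first record the structural facts I will need. By a standard density argument the generic selector $\sigma:=\bigcup_{f\in G}f^{-1}(1)$ meets every $I_n$ in exactly one point, so $\pi|_\sigma$ is a bijection of $\sigma$ onto $\mathbb N$; dually, $\mathbb N\setminus\sigma=\bigcup_{f\in G}f^{-1}(0)$ and $\sigma$ is disjoint from every member of $\mathcal I_B$. The tie-point setup gives that $\mathcal U$ meets neither $\mathcal I_A$ nor $\mathcal I_B$, that each $p\in A\setminus\{\mathcal U\}$ contains an element of $\mathcal I_A$, and that (because $A\cap B=\{\mathcal U\}$) no $p\in A$ meets $\mathcal I_B$. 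For any $a\in\mathcal I_A$, choose $g\in G$ with $a\subseteq^*g^{-1}(1)$; the ``at most one $1$ per interval'' clause of $\Poset_0$ then yields the key decomposition
\[
\pi^{-1}(\pi(a))\,=\,a\,\cup\,e_a,\qquad e_a:=\pi^{-1}(\pi(a))\setminus a\,\subseteq^*\,g^{-1}(0)\in\mathcal I_B.
\]

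Injectivity of $\pi^*|_A$ is now immediate. Given distinct $p,p'\in A$, after possibly swapping them so that $p\neq\mathcal U$, pick $a\in\mathcal I_A\cap p$ with $a\notin p'$: take any $a_0\in p\cap\mathcal I_A$, which lies outside $p'$ automatically if $p'=\mathcal U$ and otherwise intersect it with a witness $b\in p\setminus p'$. Using the decomposition, $\pi^{-1}(\pi(a))\supseteq a$ lies in $p$; neither $a$ nor $e_a\in\mathcal I_B$ lies in $p'$ (since $p'$ avoids $\mathcal I_B$), so $\pi^{-1}(\pi(a))=a\cup e_a\notin p'$. Hence $\pi(a)$ separates $\pi^*(p)$ from $\pi^*(p')$. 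For surjectivity, given $q\in\mathbb N^*$ define
\[
p\,:=\,\{b\subseteq\mathbb N:\pi(b\cap\sigma)\in q\}.
\]
Because $\pi|_\sigma$ is a bijection onto $\mathbb N$, $p$ is a nonprincipal ultrafilter with $\sigma\in p$; disjointness of $\sigma$ from every $f^{-1}(0)$ forces $p\cap\mathcal I_B=\emptyset$, placing $p\in A$, and a one-line calculation gives $\pi^*(p)=q$.

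The main technical point is the decomposition $\pi^{-1}(\pi(a))=a\cup e_a$ with $e_a\in\mathcal I_B$: this is exactly where the defining clause of $\Poset_0$ (as opposed to $\Poset_1$) is essential, since it identifies the ``leftover'' of each $1$-containing interval with a piece of $g^{-1}(0)$. Everything else is routine bookkeeping in the tie-point Boolean algebra, combined with the fact that the generic selector $\sigma$ provides a single copy of $\mathbb N$ inside $A$ that is large enough to witness the homeomorphism.
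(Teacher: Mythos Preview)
Your choice of map is exactly the paper's: the interval-collapse $\pi$ and its Stone extension $\pi^*$. Your injectivity argument is correct and close to the paper's in spirit; the paper phrases it as a fiber computation (each $(\pi^*)^{-1}(y)$ meets $A$ in exactly one point), while you separate two given points of $A$ directly via the decomposition $\pi^{-1}(\pi(a))=a\cup e_a$ with $e_a\in\mathcal I_B$. That decomposition is indeed the heart of the matter and is precisely where the $\Poset_0$ clause is used.

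The surjectivity argument, however, does not work. Your selector $\sigma=\bigcup_{f\in G}f^{-1}(1)$ is not a selector at all: the ordering on $\Poset_0$ is $\subseteq^*$, so the generic filter is closed under finite modifications, and for every $i\in\mathbb N$ there is some $g\in G$ with $g(i)=1$; hence $\sigma=\mathbb N$. (Symmetrically $\bigcup_{f\in G}f^{-1}(0)=\mathbb N$, so your displayed identity $\mathbb N\setminus\sigma=\bigcup_{f\in G}f^{-1}(0)$ fails on both sides.) More conceptually, the property you want of $\sigma$ --- that it almost contains every $f^{-1}(1)$ and is almost disjoint from every $f^{-1}(0)$ --- says exactly that $\sigma$ separates $\mathcal I_A$ from $\mathcal I_B$. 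But these ideals are \emph{not} separated: any separator would give a clopen set containing $A$ and disjoint from $B$, contradicting $A\cap B=\{\mathcal U\}$. So no $\sigma$ with your properties can exist.

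Surjectivity is nonetheless easy to recover. Since $\pi^*[A]$ is compact, it is closed; if it missed some basic clopen $b^*$, then $(\pi^{-1}(b))^*\cap A=\emptyset$, forcing $\pi^{-1}(b)=\bigcup_{n\in b}I_n\in\mathcal I_B$, i.e.\ $\bigcup_{n\in b}I_n\subseteq^* g^{-1}(0)$ for some $g\in G$. But no full interval $I_n$ can lie in $g^{-1}(0)$ (either $I_n\not\subset\dom(g)$, or $I_n\subset\dom(g)$ and then $g^{-1}(1)\cap I_n\neq\emptyset$), a contradiction. Alternatively, just adopt the paper's fiber argument, which yields injectivity and surjectivity simultaneously.
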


\begin{proof} Let $\psi \in \mathbb N^{\mathbb N}$ be defined so that
 $\psi ([2^n,2^{n+1})) = \{n\}$ for all $n$, and let $\psi^*$ denote
the canonical extension with domain and range $\mathbb N^*$. In fact, 
 for each free ultrafilter $\mathcal W$, 
 the preimage
 of $\mathcal W$ under $\psi^*$ is the set of
 ultrafilters extending
 $\{ \psi^{-1}[W] : W\in \mathcal W\}$.
Recall that $A $ is the closure of the set
 $\bigcup\{ \left(f^{-1}(1)\right)^* : f\in G\}$. We will simply show
 that  $\psi^*\restriction A$ is one-to-one. 
Let $\mathcal V = \{ b \subset \mathbb N : \psi^{-1}(b) \in \mathcal
U\}$. 
By the definition of
 $\Poset_0$, it follows that, for each $f\in G$, 
 $\psi^* \restriction
 \left(f^{-1}(1)\right)^*$ is one-to-one  and that
 $\psi\left( f^{-1}(1) \right) \notin \mathcal V$. It  follows easily
 that the preimage of any point of $\mathbb N^*\setminus \{\mathcal
 V\}$ contains a single point in $A$. Now suppose that 
 $\mathcal W\neq \mathcal U$ is in the preimage of $\mathcal V$. Since
 $\mathcal U$ is generated by $\{ \mathbb N\setminus \dom(f) : f\in
 G\}$, we may choose an $f\in G$ with $\dom(f)\in \mathcal W$. Since
 $\psi\left( f^{-1}(1) \right) \notin \mathcal V$, we have that
 $f^{-1}(0)\in \mathcal W$. But now, $ f^{-1}(0) $ is mod finite
 disjoint from each member of $\mathcal I_A$, which shows that
 $\mathcal W\notin A$.
\end{proof}

The rest of the paper is devoted to proving the following theorems. 

\begin{theorem} 
In the extension obtained by forcing over a model of PFA by any
 of $\Poset_0$, $\Poset_1$, or $\Poset_2$,
 if $\Phi$ is an\label{cccfin} 
automorphism of $\pomegaf$,
then $\triv(\Phi)$ is a ccc over fin ideal. 
\end{theorem}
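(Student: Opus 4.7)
The argument proceeds by contradiction. Assume in $V[G]$, with $V \models$ PFA and $\Poset \in \{\Poset_0, \Poset_1, \Poset_2\}$, that $\{A_\xi : \xi < \omega_1\}$ is an uncountable almost disjoint family none of whose members lies in $\triv(\Phi)$. The $\aleph_2$-distributivity of $\Poset$ forces $\langle A_\xi : \xi < \omega_1 \rangle$, and indeed any $\omega_1$-sequence of reals of $V[G]$, into the ground model $V$. In particular, if $F$ is a fixed lifting of $\Phi$, then $\langle F(A_\xi)/\fin : \xi < \omega_1 \rangle \in V$ as well. This pulls the combinatorics of $F$ back into the PFA model, which provides the key leverage.

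Working in $V$, fix a $\Poset$-name $\dot F$ for the lifting. Using $\aleph_1$-closure together with the fact from \cite{step.28} that $\triv(\Phi)$ is a dense $P$-ideal in $V[G]$, I would build for each $\xi$ a condition $p_\xi$, an infinite $B_\xi \subseteq A_\xi$ in $V$, and a function $h_\xi : B_\xi \to \mathbb N$ in $V$, such that $p_\xi$ forces $h_\xi$ to induce $\dot F$ on $B_\xi$; the hypothesis $A_\xi \notin \triv(\Phi)$ is precisely what prevents taking $B_\xi = A_\xi$.

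The heart of the proof is to apply a PFA consequence---most naturally OCA or the $P$-ideal dichotomy---in $V$ to the sequence $\{(p_\xi, B_\xi, h_\xi) : \xi < \omega_1\}$, producing an uncountable $\Gamma \subseteq \omega_1$ and a ground-model function $h$ with $h \restriction B_\xi =^* h_\xi$ for each $\xi \in \Gamma$. Using the coherent $h$ together with the $P$-ideal property of $\triv(\Phi)$, one then runs a Velickovic/Farah-style rigidity argument in the extension to deduce that $\Phi$ must be trivial on $A_{\xi_0}$ for some $\xi_0 \in \Gamma$, contradicting the initial choice of the $A_\xi$. For $\Poset_2$ an extra step is needed, in which one first pre-composes $\Phi$ with the canonical Stone extension of the generic tie-point involution in order to reduce to a situation resembling the $\Poset_0$ and $\Poset_1$ cases. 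The principal obstacle throughout is the combinatorial core of the PFA application: the coloring or $P$-ideal must be engineered so that its ``wrong'' alternative is incompatible with $A_\xi \notin \triv(\Phi)$ for cofinally many $\xi$, and so that the coherent $h$ obtained in its ``right'' alternative genuinely propagates triviality to a full member of the almost disjoint family rather than merely to a pseudo-union of infinite subsets of its members.
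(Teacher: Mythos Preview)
Your proposal is a strategy sketch rather than a proof, and the strategy has a genuine gap at precisely the point you yourself flag as ``the principal obstacle.'' Grant that you obtain an uncountable $\Gamma$ and a single $h$ with $h\restriction B_\xi =^* h_\xi$ for all $\xi\in\Gamma$. This says only that $F$ is induced by $h$ on each $B_\xi$; it says nothing about $A_\xi\setminus B_\xi$, and your outline contains no mechanism that propagates triviality from the $B_\xi$ to a full $A_\xi$. Invoking a ``Velickovic/Farah-style rigidity argument'' does not help here: those arguments convert a $\sigma$-Borel lifting on a comeagre set into a trivial one, whereas what you have is merely triviality on an almost disjoint family of thin subsets, with no Borel description of $F$ anywhere in sight. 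The proposed special treatment of $\Poset_2$ (precomposing with the generic involution) is also unmotivated; the paper handles all three posets uniformly at this stage.

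The paper's route is substantially different and more involved. One passes to an intermediate extension $V[H]$ by $2^{<\omega_1}$, selects there a $V$-generic filter $\mathfrak F\subset\Poset$ with $\Poset(\mathfrak F)$ ccc and $\omega^\omega$-bounding, and uses the $\sigma$-Borel lifting machinery to obtain a sequence $\{n_k\}$ and, for each $q\in\mathfrak F$, a function $h_q$ inducing $F$ on $a_q=\bigcup\{[n_k,n_{k+1}):[n_k,n_{k+1})\subset\dom(q)\}$. A PFA application to a five-term proper iteration then shows that the directed family $\{h_q\}$ admits a common mod-finite extension as a $\Poset(\mathfrak F)$-name $\dot h$. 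The contradiction is \emph{not} obtained by showing $F$ trivial on some $a_\alpha$. Instead, after arranging that $\{a_\alpha\cup b_\alpha\}$ (with $b_\alpha=F(a_\alpha)$) is almost disjoint, one analyzes the orbits $\Orb_q(i)=\{j:(\exists p<q)\ p\Vdash\dot h(i)=j\}$ and, via a fusion, produces a condition $r$ together with selectors $\{i_s:s\in S'(k,r)\}$ having pairwise disjoint orbits. From this one builds a $\Poset$-name of an ultrafilter $x$ for which no extension of $r$ can consistently decide whether a fixed set $X(r)$ lies in $\Phi(x)$, contradicting that $\Phi$ is an automorphism. Your outline contains neither this orbit analysis nor the ultrafilter construction, and nothing in it points to an alternative endgame.
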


\begin{theorem}
In the extension obtained by forcing over a model of PFA
by  $\Poset_0$\label{noauto}  all
 automorphisms on $\pomegaf$ are trivial.
\end{theorem}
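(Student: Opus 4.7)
Let $V \models \mathrm{PFA}$, let $G$ be $\Poset_0$-generic over $V$, and let $\Phi \in V[G]$ be an automorphism of $\pomegaf$. The plan is to produce $h \in \mathbb{N}^{\mathbb{N}}$ inducing $\Phi$.

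First, I would apply Theorem \ref{cccfin} so that $\triv(\Phi)$ is ccc over fin, and combine this with the result of \cite{step.28} that $\triv(\Phi)$ is already a dense $P$-ideal. Next, I would exploit the structural feature of $\Poset_0$ established in the preceding theorem of this section: the regular closed piece $A$ of the generic tie-point is homeomorphic to $\mathbb{N}^*$ via the map dual to $\psi : [2^n, 2^{n+1}) \mapsto n$, and this homeomorphism is the restriction of a continuous self-map $\psi^*$ of $\mathbb{N}^*$. Since $\psi$ itself lives in $V$, this concrete identification aligns the Boolean algebra of relatively clopen subsets of $A$ with $\pomegaf^V$ through a ground-model map.

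The main step is to show that $\mathcal{I}_A \subseteq \triv(\Phi)$ and, symmetrically, $\mathcal{I}_B \subseteq \triv(\Phi)$. The plan is to conjugate $\Phi$ on the subalgebra generated by $\mathcal{I}_A$ through $\psi^*$ so as to obtain an endomorphism-like object between copies of $\pomegaf$, then use the $\aleph_1$-closure and $\aleph_2$-distributivity of $\Poset_0$ to argue that the relevant trivializing data already live in $V$, where PFA forces triviality by Shelah--Veli\v{c}kovi\'c. The delicate point is that the tie-point ultrafilter $\mathcal{U}$ is genuinely $\Poset_0$-generic, so $\Phi$ restricted to sets near $\mathcal{U}$ need not be a ground-model object a priori; the argument must extract such an object through a density/coherence argument exploiting the specific form of $\Poset_0$-conditions (at most one $1$ per interval $[2^n, 2^{n+1})$, non-empty exactly when the interval is contained in the domain). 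This extra rigidity is precisely what distinguishes $\Poset_0$ from $\Poset_1$ and $\Poset_2$ and is what should permit the stronger conclusion of Theorem \ref{noauto} over the weaker Theorem \ref{cccfin}.

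Finally, I would glue the resulting trivializing functions on $\mathcal{I}_A$ and on $\mathcal{I}_B$ into a single $h: \mathbb{N} \to \mathbb{N}$, using that these two ideals generate a dense subalgebra of $\pomegaf$ and that the ccc over fin property of $\triv(\Phi)$ rules out uncountable obstructions at the seam near $\mathcal{U}$. I expect this gluing step to be relatively routine compared to the ground-model extraction in the main step, which I take to be the chief obstacle.
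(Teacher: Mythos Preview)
Your proposal has a genuine gap at the main step, and the overall architecture diverges sharply from what the paper actually does.

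The central problem is the ``conjugate through $\psi^*$ and reduce to ground-model PFA'' move. Restricting $\Phi$ to the subalgebra generated by $\mathcal{I}_A$ and pushing through $\psi^*$ gives you, at best, an injective homomorphism from $\pomegaf$ into $\pomegaf$, not an automorphism: there is no reason for $\Phi[\mathcal{I}_A]$ to land back in $\mathcal{I}_A$ (or in any subalgebra naturally identified with $\pomegaf$). The Shelah--Stepr\=ans/Veli\v{c}kovi\'c machinery you invoke is for automorphisms, and the paper explicitly flags that extending Theorem~\ref{cccfin} from automorphisms to onto homomorphisms already requires ``lengthy verifications''; your conjugated object is not even onto. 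Appealing to $\aleph_2$-distributivity does not help here: $\mathcal{P}(\mathbb N)$ is unchanged, but $\Phi$ itself is a genuinely new object of size $\mathfrak c$, so there is no ground-model automorphism to which PFA can be applied. You identify this as ``the delicate point'' but offer no mechanism to overcome it. The gluing step is also not routine: $\mathcal{I}_A\oplus\mathcal{I}_B$ is already a dense ccc-over-fin $P$-ideal disjoint from $\mathcal U$, so even $\mathcal{I}_A\cup\mathcal{I}_B\subset\triv(\Phi)$ together with Theorem~\ref{cccfin} and Lemma~\ref{Pideal} does not force $\triv(\Phi)\cap\mathcal U\neq\emptyset$.

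The paper's proof is entirely different and does not use the tie-point decomposition $A\cup B$ or the map $\psi^*$ at all. It continues the $\Poset(\mathfrak F)$ analysis of Section~4: assuming $\Phi$ is non-trivial, one already has the $\Poset(\mathfrak F)$-name $\dot h$ mod-finite extending every $h_q$, together with the interval structure $\{n_k\}$ and the locality Lemma~\ref{preserve2}. The $\Poset_0$-specific work is Lemma~\ref{p0trivial}: a fusion argument (crucially using that in $\Poset_0$ each interval carries at most one value $1$) produces a condition $p$ such that for every $i\notin\dom(p)$ there is $t_i$ with $p\cup\{(i,0)\}\Vdash\dot h(t_i)=u_i$ and $p\cup\{(i,1)\}\Vdash\dot h(t_i)=v_i$ for distinct $u_i,v_i$. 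The contradiction is then obtained directly by choosing $Y$ with $F(Y)=\{u_i:i\notin\dom(p)\}$ and a short case analysis, ending with a violation of injectivity of $\dot h$. No reduction to a ground-model automorphism is used.
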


\begin{theorem}
In the extension obtained by forcing over a model of PFA
by $\Poset_1$, there\label{noN*}
are non-trivial automorphisms.
\end{theorem}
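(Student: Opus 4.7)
The plan is to manufacture a non-trivial autohomeomorphism of $\mathbb N^*$ in $V[G]$ by exploiting the evident $\mathbb Z/2$-symmetry of $\Poset_1$. Specifically, the map $\sigma \colon \Poset_1 \to \Poset_1$ defined by $\sigma(f)(n) = 1 - f(n)$ for $n \in \dom(f)$ is an involutive order-automorphism that fixes the name for $\mathcal U$ (since $\dom \sigma(f) = \dom f$) but interchanges the names for $\mathcal I_A$ and $\mathcal I_B$. My aim is to turn this abstract symmetry into an actual autohomeomorphism $\Phi$ of $\mathbb N^*$ in $V[G]$ that swaps $A$ and $B$ while fixing $\mathcal U$.

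First I would produce, in $V[G]$, a homeomorphism $\phi \colon A \to B$ with $\phi(\mathcal U) = \mathcal U$. Because $\Poset_1$ is $\aleph_1$-closed, no new reals are added, so the ideals $\mathcal I_A$ and $\mathcal I_B$ consist entirely of ground-model subsets of $\mathbb N$, and PFA in $V$ (through OCA together with the rigidity of $\pomegaf$) supplies the combinatorial tools needed to coherently amalgamate the partial swaps contributed by individual conditions of $G$. Concretely I would build a $\Poset_1$-name $\dot\phi$, exhibit a dense set of conditions deciding $\dot\phi$ on enough generators of $\dot{\mathcal I}_A$, and invoke the ground-model PFA to see that these local decisions cohere into a total homeomorphism in $V[G]$ carrying $\mathcal U$ to $\mathcal U$.

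Once $\phi$ is in hand, define $\Phi \colon \mathbb N^* \to \mathbb N^*$ by $\Phi(x) = \phi(x)$ for $x \in A$ and $\Phi(x) = \phi^{-1}(x)$ for $x \in B$; since $\phi$ fixes $\mathcal U$, the two pieces agree at the unique intersection point $A \cap B = \{\mathcal U\}$ and give a continuous involution, hence an automorphism of $\pomegaf$. For non-triviality, suppose toward a contradiction that $\Phi$ is induced by some $h \in \mathbb N^{\mathbb N} \cap V[G]$. Because no reals are added, $h \in V$. Given such a ground-model $h$, a density argument on $\Poset_1$ (exploiting the $\limsup$ clause of Definition \ref{poset} to arrange the $1$-set of an extension freely) produces, below any $f_0 \in \Poset_1$, an $f \le f_0$ forcing that no $g \in \dot G$ satisfies $h^{-1}(f^{-1}(1)) \subseteq^* g^{-1}(0)$, contradicting $h^{-1}(f^{-1}(1)) \in \mathcal I_B$.

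The main obstacle is producing $\phi \colon A \to B$ in $V[G]$. The symmetry $\sigma$ is an automorphism of the forcing rather than of the generic extension, and generically $\sigma[G] \ne G$, so $\sigma$ does not by itself yield an internal object. The bulk of the work lies in the PFA-based coherence argument that converts the external symmetry $\sigma$ into a definable internal homeomorphism of the pointed compactum $(A,\mathcal U)$ onto $(B,\mathcal U)$.
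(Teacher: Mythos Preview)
Your proposal has a genuine gap at the crucial step: you never say what the ``local decisions'' deciding $\dot\phi$ on generators of $\dot{\mathcal I}_A$ actually are. The involution $\sigma$ of $\Poset_1$ yields an external isomorphism between the structures $(V[G],A,B,\mathcal U)$ and $(V[G],B,A,\mathcal U)$ (since $V[\sigma[G]]=V[G]$ as models), but this is a statement about models, not a construction of an internal homeomorphism. For $\Poset_2$ the generic permutation itself furnishes a bijection between the two halves; for $\Poset_1$ there is no canonical bijection between $f^{-1}(1)$ and $f^{-1}(0)$ for a condition $f$, so there is nothing for a condition to ``decide'' about $\phi$. Invoking ``OCA together with the rigidity of $\pomegaf$'' does not help: rigidity results constrain autohomeomorphisms that already exist, they do not manufacture them from external symmetries. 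You yourself identify this as the bulk of the work and the main obstacle, and the proposal does not overcome it.

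The paper's proof proceeds along an entirely different line and does not use the canonical tie-point $\mathcal U$ or its halves at all. It builds, in $V[G]$, a \emph{new} tie-point $\mathcal W$ and new regular closed sets $A,B$ (with $A\cup B\neq\mathbb N^*$) by specifying $\Poset_1$-names $\dot t_m$ for pairs inside $[2^m,2^{m+1})$: the value of $\dot t_m$ depends only on $|\,(p\restriction[2^m,2^{m+1}))^{-1}(1)\,|$, and $A,B$ collect the minima and maxima of these pairs. By design the collapse map $[2^m,2^{m+1})\mapsto\{m\}$ restricts to a homeomorphism of each of $A$ and $B$ onto $\mathbb N^*$, which immediately gives a homeomorphism $A\to B$ fixing $\mathcal W$; this extends by the identity outside $A\cup B$ to the desired non-trivial involution. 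The real work is a Ramsey-type construction (the ``$\ell$-structures'') arranging the pairs so that the associated filter $\mathcal W$ is forced to be an ultrafilter, i.e., so that $A\cap B$ is a single point.
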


These results are re-stated and proven as
theorems \ref{repeatcccfin}, \ref{repeatnoauto}, and \ref{ramsey} respectively. 
Let us again remark that   Theorem \ref{cccfin} holds
for all onto homomorphisms but this requires lengthy verifications
that the results for automorphisms from \cite{step.28,step.29,DSh2} 
also hold for onto homomorphisms.
As mentioned above a homomorphism $\psi $ from $\mathcal P(\mathbb N)/\fin $ 
onto $\mathcal
P(\mathbb N)/\fin$ is said to be trivial, if there is function $h
\in \mathbb N^{\mathbb N}$ which induces $\psi$ in
the sense that $\psi(a/\fin) = \left(h^{-1}(a)\right)/\fin$
 for all $a\subset \mathbb N$.
We intend to  deal with automorphisms only so 
it will be  more convenient  to work with the inverse map; hence
 $\psi(a/fin) = h(a)/\fin$. 
We will say that $\psi $ is not trivial at a point
$x\in \mathbb N^*$ if no member of $\mbox{triv}(\psi)$ is in the ultrafilter
corresponding to $x$.

\section{The auxillary poset $\Poset(\mathfrak F)$}

For  this section
 let $\Poset$ denote any one of the posets
 $\mathbb P_0, \mathbb P_1, \mathbb P_2$.  It is known that
 $\Poset$ is $\sigma$-directed closed.
 The following partial order was introduced in \cite{step.28} as a  
tool to uncover the forcing preservation properties of $\Poset$, such
as Velickovic's result that PFA implies that 
 $\Poset$ is $\aleph_2$-distributive
(and so introduces no new $\omega_1$-sequences of subsets of $\mathbb
N$).

\begin{definition}
 Let $\mathfrak F$ denote any filter
on $\Poset$. Define
   $\Poset(\mathfrak{F})$  
to be the partial order consisting of all $g\in
\Poset$ 
such that there is some $f\in \mathfrak{F}$ which is almost equal to
it.  The ordering on  $\Poset(\mathfrak{F})$ 
is $f\leq g$ if $f\supseteq g$.
\end{definition}

The forcing poset
 $\Poset(\mathfrak  F)$  (which is just the set $\mathcal F$)
  introduces a new total
function $f$ which 
extends mod finite 
 every member of $\mathfrak F$.
 Although $f$ will  not be a member of
$\Poset$ it is only 
 because its domain does not satisfy the growth condition
 (\ref{growth}) in the definition of $\Poset$.
There is a simple
 $\sigma$-centered poset $\mathcal S$ which will force
 an appropriate set $I\subset\mathbb N$ which
 mod finite contains all the domains of members of $\mathfrak F$
and satisfies that $f\restriction I$ is a member of $\Poset$ 
which is below each member of $\mathfrak  F$
  (see \cite[2.1]{step.28}).

A strategic choice of the filter $\mathfrak F$ will ensure that
$\Poset(\mathfrak F)$ is ccc and much more.
Again we are lifting results from \cite[2.6]{step.28} and
\cite[proof of Thm. 3.1]{step.29}. 
  A poset is said to be $\omega^\omega$-bounding
if every new function in $\omega^\omega$ is bounded by some ground
model function. 

\begin{lemma}[\cite{step.29}]
In the forcing extension, $V[H]$,
 by $2^{<\omega_1}$,
  there is a maximal filter
$\mathfrak F$ on  $\Poset$ which
  is $\Poset$-generic over $V$ and for which 
$\Poset(\mathfrak F)$ is ccc, $\omega^\omega$-bounding, 
and preserves that $\mathbb R\cap V$ is not meager.
\end{lemma}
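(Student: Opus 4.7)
The plan is to construct $\mathfrak F$ by a transfinite recursion of length $\omega_1$ inside $V[H]$, using the Cohen-type generic $H$ for $2^{<\omega_1}$ as a bookkeeping device. Since $2^{<\omega_1}$ is $\sigma$-closed and so adds no new reals, the family of $V$-dense open subsets of $\Poset$ has cardinality at most $\aleph_1$ in $V[H]$ (given the cardinal arithmetic available in $V$), so it may be enumerated as $\langle D_\alpha : \alpha<\omega_1\rangle$. In the same bookkeeping one lists, using a name $\dot{\mathfrak F}$ for the filter under construction, the relevant $\Poset(\dot{\mathfrak F})$-names: potential maximal antichains in $\Poset(\dot{\mathfrak F})$, functions in $\omega^\omega$, and closed nowhere-dense subsets of $\mathbb R$.

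At stage $\alpha$, having already chosen a decreasing sequence $\{f_\beta : \beta<\alpha\}\subseteq\Poset$, I would apply $\sigma$-directed closure of $\Poset$ to obtain a common mod-finite lower bound $g$, and then refine $g$ to a single element $f_\alpha\in D_\alpha$ satisfying three additional commitments: (a) $f_\alpha$ decides the $\alpha$-th name for a maximal antichain down to a countable subfamily already absorbed into the partial filter; (b) $f_\alpha$ forces a ground-model upper bound on the $\alpha$-th $\omega^\omega$-name; and (c) $f_\alpha$ forces the $\alpha$-th nowhere-dense name to miss some designated real of $V$. The generic $H$ is used to select, among the many admissible refinements at each step, in a sufficiently ``random'' way that $V$ cannot predict the eventual $\mathfrak F$. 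Taking $\mathfrak F$ to be the upward closure in $\Poset$ of $\{f_\alpha : \alpha<\omega_1\}$ yields a maximal filter that meets every $V$-dense open subset of $\Poset$.

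The principal obstacle is commitment (a): given a $\Poset(\dot{\mathfrak F})$-name $\dot A$ for a maximal antichain, one must exhibit a refinement of $g$ in $\Poset$ that already forces $\dot A$ to be countable. This is precisely where the growth condition \ref{growth} of Definition \ref{poset}, namely $\limsup_n\card{[2^n,2^{n+1})\setminus\dom(f)} = \infty$, becomes indispensable: it supplies infinitely many ``free'' intervals on which one may extend $g$ without violating earlier commitments, so that a fusion-style argument can amalgamate countably many pairwise-incompatible refinements produced from $\dot A$ into a single condition that reflects $\dot A$ to a countable piece. Once (a) and (b) are simultaneously arranged, (c) follows from a standard preservation lemma for $\omega^\omega$-bounding proper forcings that do not meagerize ground-model reals, together with the observation that $2^{<\omega_1}$ itself does not meagerize the reals of $V$. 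The structural details parallel the verification carried out in the proof of Theorem~3.1 of \cite{step.29}, whose format I would follow closely, adapting the bookkeeping to handle all three of the poset variants $\Poset_0$, $\Poset_1$, $\Poset_2$ uniformly.
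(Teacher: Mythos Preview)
The paper does not supply its own proof of this lemma; it simply cites \cite{step.28} and \cite{step.29}. So the comparison is with the argument in those references.

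Your proposal contains a genuine gap. The assertion that the family of $V$-dense open subsets of $\Poset$ has cardinality at most $\aleph_1$ in $V[H]$ is not justified and is in general false. Under PFA one has $\lvert\Poset\rvert=\mathfrak c=\aleph_2$, so the family of dense open subsets of $\Poset$ in $V$ can have size up to $(2^{\aleph_2})^V$. The forcing $2^{<\omega_1}$ is $\sigma$-closed and does collapse $\aleph_2^V$ to $\aleph_1$, but it has size $\aleph_2^V$ and is therefore $\aleph_3^V$-cc; hence it does not collapse $(2^{\aleph_2})^V$ when that cardinal exceeds $\aleph_2^V$ (and PFA does not decide $2^{\aleph_2}$). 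An $\omega_1$-length recursion that tries to meet an explicit list of all $V$-dense open sets therefore cannot succeed. Note also that merely arranging maximality of $\mathfrak F$ (which you \emph{can} do, since $\lvert\Poset\rvert=\aleph_1$ in $V[H]$) does not yield $V$-genericity.

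The argument in \cite{step.28,step.29} avoids any such enumeration. The name $\dot{\mathfrak F}$ is defined in $V$ by an $\omega_1$-length recursion in which, at each stage $\alpha$, a block of the generic $H$ is read as a real and decoded (via a fixed $V$-surjection of $2^\omega$ onto $\Poset$) to produce the next condition $f_{\alpha+1}\leq f_\alpha$. Genericity of $\mathfrak F$ over $V$ is then \emph{inherited} from genericity of $H$: for any dense $D\subseteq\Poset$ in $V$, the set of $p\in 2^{<\omega_1}$ forcing $\dot{\mathfrak F}\cap\check D\neq\emptyset$ is dense, since one may extend $p$ to determine some $f_\alpha$ and then choose the next block of $H$ to code an element of $D$ below $f_\alpha$. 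Your parenthetical remark that ``the generic $H$ is used to select \ldots\ in a sufficiently random way'' is exactly the right idea, but it is a \emph{replacement} for the enumeration of dense sets, not a supplement to it. The ccc, $\omega^\omega$-bounding, and non-meager preservation are then interleaved into this same recursion via the fusion machinery you describe; that part of your outline is in the right spirit.
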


Almost all of the work we have to do is to establish additional
preservation results for the poset(s) $\Poset(\mathfrak F)$. Once
these are established, we are able to apply the standard PFA type
methodology as demonstrated in \cite{step.28, step.29}
to determine properties of the forcing extension by $\Poset$.
As mentioned above, we have this result from \cite[Corollary 3.3]{step.29}.

\begin{lemma}In the model $V[H]$,
   the ideal $\triv(F)$ is a\label{Pideal} dense P-ideal.
\end{lemma}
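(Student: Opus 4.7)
The plan is to adapt the argument of \cite[Corollary 3.3]{step.29} to the present three-poset framework. Since $2^{<\omega_1}$ is countably closed, it adds no new reals, so each automorphism $F$ of $\pomegaf$ in $V[H]$ acts on the same Boolean algebra as in $V$; however $F$ itself is generally not in $V$, so the density and $P$-ideal verifications must be carried out in $V[H]$ using the preservation properties of the auxiliary poset $\Poset(\mathfrak F)$ established in the preceding lemma.

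For the density of $\triv(F)$, I would fix an infinite $c \subset \mathbb N$ and set up in $V[H]$ a ccc poset $\Qposet_c$ of finite approximations to a pair $(h,b)$ where $b$ is an infinite subset of $c$ and $h$ is a partial function approximating a lifting of $F$ restricted to $b$. An application of MA$_{\omega_1}$ to $\Qposet_c$, using the preservation of MA-type consequences of PFA through the countably closed forcing $2^{<\omega_1}$, then produces a generic witness that $b \in \triv(F)$.

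For the $P$-ideal property, given a countable family $\{a_n\}_n \subset \triv(F)$ with witnesses $h_n : a_n \to \mathbb N$, I would diagonalize to produce $a$ almost containing every $a_n$ together with a function $h$ witnessing $a \in \triv(F)$. The $\omega^\omega$-bounding of $\Poset(\mathfrak F)$ allows us to dominate the growth rates of the $h_n$ by a common ground-model function, and the preservation of non-meagerness of $\mathbb R \cap V$ is then used to certify that the pasted $h$ genuinely induces $F$ on $a$ rather than merely matching it on each $a_n$ individually.

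The main obstacle is the ccc verification for $\Qposet_c$ in $V[H]$, since $F$ is not available in $V$ and so PFA in $V$ cannot be applied to it directly. The resolution, implicit in \cite{step.29}, is to factor the analysis through $\Poset(\mathfrak F)$: the ccc, $\omega^\omega$-bounding, and meagerness-preserving properties of $\Poset(\mathfrak F)$ allow ccc of $\Qposet_c$ in $V[H]$ to be deduced from properties of $\Poset$-names in $V$, where PFA is fully applicable.
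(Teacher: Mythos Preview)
The paper gives no proof of this lemma; it simply cites \cite[Corollary~3.3]{step.29}. So there is no in-paper argument to compare against.

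Your sketch, however, has a real gap in the density argument. The proposed poset $\Qposet_c$ of ``finite approximations to a pair $(h,b)$'' does not make sense as stated: a condition would presumably be a finite $s\subset c$ together with a finite partial map $h_s$, but there is no way to check compatibility of two such conditions, or to extend one, without already knowing that $F$ admits local liftings on $c$---which is exactly what density asserts. Such a poset would be ccc (indeed $\sigma$-centered) for \emph{any} function $F$ whatsoever, so it cannot be the source of the content. In the actual Shelah--Stepr\=ans argument, density is obtained by showing that $F$ has a $\sigma$-Borel lifting on a suitable comeager set (this is where the $\omega^\omega$-bounding and non-meager preservation of $\Poset(\mathfrak F)$ enter), and then invoking Veli\v{c}kovi\'{c}'s theorem that a $\sigma$-Borel lifting is induced by a function on an infinite set. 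Your final paragraph gestures toward ``factoring through $\Poset(\mathfrak F)$,'' but that \emph{is} the argument, not a device for rescuing the ccc-ness of a poset that cannot be written down.

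A second issue: you invoke ``preservation of MA-type consequences of PFA through the countably closed forcing $2^{<\omega_1}$,'' but MA$_{\omega_1}$ is not in general preserved by countably closed forcing, and no forcing axiom is applied inside $V[H]$. The mechanism throughout is to apply PFA in the ground model $V$ to proper iterations of the form $2^{<\omega_1} * \Poset(\mathfrak F) * \dot Q$ (followed by a $\sigma$-centered tail that recovers a genuine $\Poset$-condition), and then read off the conclusion in $V[H]$.
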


\section{$\sigma$-Borel liftings and ccc over fin}

A lifting of a map $\Phi$ 
 from $\mathcal P(\mathbb N)/\fin$ to 
itself is any function $F$
 from $\mathcal P(\mathbb N)$ into $\mathcal
P(\mathbb N)$ which satisfies that $F(a)/\fin = \Phi(a/\fin)$ for all
$a\in \mathcal P(\mathbb N)$. For each $\ell\in \mathbb N$ and
$s\subset \ell$, let $[s;\ell] = \{ x\subset \mathbb N : x\cap \ell =
s\}$. This defines the standard Polish topology on $\mathcal P(\mathbb
N)$.
For a set $\mathcal C\subset \pomega$ and a function $F$
 on $\pomega$, let
 us say that $F\restriction \mathcal C$ is
 $\sigma$-Borel  if there is sequence $\{\psi_n : n\in
 \omega\}$ of Borel functions on $\pomega$ such that for each 
$b\in \mathcal C$, there is an $n$ such that $F(b)=^* \psi_n(b)$.

We continue the analysis of $\Poset$-names from $V$ in 
the forcing extension $V[H]$ using a $V$-generic filter $\mathcal
F\subset \Poset$  (continuing that $\Poset$ is one of
 $\Poset_0,\Poset_1, \Poset_2$).
 In particular, fix $\dot \Phi$  a
$\Poset$-name which is forced by $\mathbf{1}_{\Poset}$
 to be a lifting of an
automorphism  of   $\mathcal
P(\mathbb N)/\fin$. Let 
 $F$ denote $\val_{\mathcal F}(\dot \Phi)$. Of course it follows that,
 in $V[H]$,
 $F$ is a lifting of an automorphism of   $\mathcal
P(\mathbb N)/\fin$. Note that forcing with 
$2^{<\omega_1}$ does not change the set
 $\mathcal P(\mathbb N)$.

The following key result of  (\cite[2.3]{DSh2})  was extracted from 
\cite{step.28} and \cite[Theorem 3.3]{step.29}.  

\begin{lemma}[PFA]
For any\label{mainlemma}  dense $P$-ideal $\mathcal I$ on 
$\Naturals$ and
for each $\Poset(\mathfrak F)$-generic filter
$G$,  there is an $I\in \mathcal I $ such 
that $F\restriction \left(V\cap [\Naturals\setminus 
I]^\omega\right)$ is $\sigma$-Borel in the extension
 $V[H][G]$.
\end{lemma}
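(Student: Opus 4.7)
The goal, working in $V[H][G]$, is to construct a single set $I\in\mathcal I$ together with a countable family of Borel functions $\{\psi_n:n\in\omega\}$ on $\pomega$ such that for every $b\in V$ with $b\subseteq\mathbb N\setminus I$, some $\psi_n$ satisfies $F(b)=^*\psi_n(b)$. The plan is to extract the $\psi_n$ from the $\Poset$-name $\dot\Phi$ via a continuous-reading-of-names argument, and then to use the dense $P$-ideal property of $\mathcal I$ together with the three preservation properties of $\Poset(\mathfrak F)$ -- ccc, $\omega^\omega$-bounding, and preservation of non-meagerness of $V\cap\mathbb R$ -- to pick $I$ uniformly.

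For the reading step, I would enumerate in $V[H]$ a countable sequence of maximal antichains $\{A_k : k\in\omega\}$ of $\Poset(\mathfrak F)$ deciding $\dot\Phi(a)\cap n$ for increasing $n$ and each $a\in V$. The ccc of $\Poset(\mathfrak F)$ makes each $A_k$ countable, and $\omega^\omega$-boundingness provides a ground-model function bounding how much of $a$ is needed to read the restriction of the forced value of $\dot\Phi(a)$. Together these yield a countable family $\{\psi_{k,p}\}_{p\in A_k,\,k\in\omega}$ of Borel functions in $V[H]$ such that, whenever $p\in A_k$ lies in the generic $\mathcal F$, the value $\psi_{k,p}(a)$ matches $F(a)$ on the portion of $\mathbb N$ determined by $p$, for every $a\in V$. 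Reindexing gives the countable family $\{\psi_n\}$.

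The uniformization step selects $I$. For each $a\in V$ and $n$, let $E_n(a)=\{i:\psi_n(a)(i)\ne F(a)(i)\}$; by construction, for generic $G$ and each fixed $a$ some $n$ makes $E_n(a)$ finite. A Baire category argument in $V[H][G]$ applied to the Polish space $\pomega\cap V$, which remains non-meager because $\Poset(\mathfrak F)$ preserves non-meagerness of $V\cap\mathbb R$, shows that countably many $\psi_n$ cover all $a\in V$ except on a residual error whose ``tail'' can be pushed into any prescribed dense $P$-ideal. Density of $\mathcal I$ produces, for each $k$, a set $I_k\in\mathcal I$ absorbing the $k$th residual error, and the $P$-ideal property of $\mathcal I$ yields $I\in\mathcal I$ pseudo-containing all $I_k$. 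Outside $I$, some $\psi_n$ agrees with $F(b)$ mod finite for every $b\in V\cap [\mathbb N\setminus I]^{\omega}$.

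The main obstacle is running this Baire category / uniformization step inside $V[H][G]$ so that a single $I\in\mathcal I$ works for every $b\in V$ below $\mathbb N\setminus I$. All three preservation properties of $\Poset(\mathfrak F)$ must be invoked in concert: ccc bounds the antichains, $\omega^\omega$-boundingness keeps the $\psi_n$ Borel with codes in $V[H]$, and non-meagerness preservation drives the category argument inside $V[H][G]$. The details follow the template of \cite[2.3]{DSh2} and \cite[proof of Thm. 3.1]{step.29}, the only new point being to run the argument for an arbitrary dense $P$-ideal $\mathcal I$ rather than the specific ideal $\triv(F)$ from Lemma~\ref{Pideal}.
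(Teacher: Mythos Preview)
The paper does not supply its own proof of this lemma: it is quoted verbatim as \cite[2.3]{DSh2}, with the remark that the argument was extracted from \cite{step.28} and \cite[Theorem 3.3]{step.29}. There is therefore no in-paper proof to compare your sketch against.

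That said, your outline has the right ingredients (ccc, $\omega^\omega$-bounding, preservation of non-meagerness of $V\cap\mathbb R$, density and the $P$-ideal property of $\mathcal I$) but the reading step is imprecise as written. The function $F=\val_{\mathcal F}(\dot\Phi)$ is already fully determined in $V[H]$, so it is not itself a $\Poset(\mathfrak F)$-name whose values are decided by antichains of $\Poset(\mathfrak F)$; and a literal ``countable sequence of maximal antichains deciding $\dot\Phi(a)\cap n$ for each $a\in V$'' cannot give countably many antichains uniformly over continuum-many $a$. In the cited sources the uniformity comes instead from the $\omega^\omega$-bounding property, which produces an interval partition $\langle n_k\rangle$ such that, for conditions $q\in\Poset(\mathfrak F)$, the value $F(a)\cap[n_k,n_{k+1})$ depends only on $a$ and $q$ restricted to a bounded window. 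It is this interval structure, together with the fact that conditions in $\Poset(\mathfrak F)$ are finite modifications of members of the $V$-generic $\mathfrak F\subset\Poset$, that yields genuinely Borel $\psi_n$. Your uniformization paragraph is also more of a wish list than an argument; the actual step is a fusion over the interval partition rather than an abstract Baire-category reduction, though your identification of the roles of density and the $P$-ideal property is correct.
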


One of the main results which we can extract from  \cite{step.29} 
and simply deduce from Lemma \ref{Pideal} and Lemma \ref{mainlemma}
is
the following.

\begin{lemma} $F\restriction (V\cap \mathcal P(\mathbb N))$ is
  $\sigma$-Borel in the extension\label{sigmaBorel}
  obtained by forcing with
  $\Poset(\mathfrak F)$. 
\end{lemma}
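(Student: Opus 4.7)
The plan is to combine the two preceding lemmas by splitting an arbitrary $b \in V \cap \pomega$ along a single trivializing set. First, since Lemma~\ref{Pideal} tells us that $\triv(F)$ is a dense $P$-ideal in $V[H]$, I would apply Lemma~\ref{mainlemma} with $\mathcal{I} = \triv(F)$ to produce, in $V[H][G]$, some $I \in \triv(F)$ and a countable family $\{\phi_n\}_{n\in\omega}$ of Borel functions on $\pomega$ such that every $b \in V \cap [\Naturals \setminus I]^\omega$ satisfies $F(b) =^* \phi_n(b)$ for some $n$.

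Next, I would exploit the triviality of $F$ on $I$: choose $h \in \Naturals^{\Naturals}$ in $V[H]$ with $F(a) =^* h(a)$ for every $a \subseteq I$ (using the inverse-map convention established in the introduction). The map $a \mapsto h(a \cap I)$ is Borel, since for each $k$ the coordinate
\[
\{a : k \in h(a \cap I)\} \;=\; \bigcup_{i \in h^{-1}(k)\cap I} \{a : i \in a\}
\]
is open. The operations $b \mapsto b \cap I$, $b \mapsto b \setminus I$, and pairwise union are continuous, so the countable family
\[
\psi_n(b) := h(b \cap I) \cup \phi_n(b \setminus I), \qquad \psi_\ast(b) := h(b \cap I),
\]
consists of Borel functions on $\pomega$ in $V[H][G]$.

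Finally, I would verify the $\sigma$-Borel identity on all of $V \cap \pomega$. Fix $b \in V \cap \pomega$. Because $F$ is a lifting of a Boolean homomorphism,
\[
F(b) \;=^*\; F(b \cap I) \cup F(b \setminus I) \;=^*\; h(b \cap I) \cup F(b \setminus I).
\]
If $b \setminus I$ is infinite, pick $n$ with $F(b \setminus I) =^* \phi_n(b \setminus I)$; then $F(b) =^* \psi_n(b)$. If $b \setminus I$ is finite, then $F(b \setminus I) =^* \emptyset$, so $F(b) =^* \psi_\ast(b)$. Thus $\{\psi_n : n \in \omega\} \cup \{\psi_\ast\}$ witnesses that $F \restriction (V \cap \pomega)$ is $\sigma$-Borel in $V[H][G]$.

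No step is genuinely hard once Lemmas~\ref{Pideal} and~\ref{mainlemma} are in hand; the heavy lifting lies in those two results. The only point requiring care is to extend from $V \cap [\Naturals \setminus I]^\omega$ to all of $V \cap \pomega$, which is exactly what the decomposition along $I$ and the auxiliary function $\psi_\ast$ accomplish.
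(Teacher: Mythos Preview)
Your proposal is correct and follows exactly the deduction the paper intends: the paper does not spell out a proof but simply says the lemma is deduced from Lemma~\ref{Pideal} and Lemma~\ref{mainlemma}, and your argument is the natural way to carry that out. The only implicit point worth noting is that $I\in V$ (so that $b\cap I,\ b\setminus I\in V$), which holds because forcing with $2^{<\omega_1}$ adds no subsets of $\mathbb N$; otherwise every step is as you describe.
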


We will also need several results from \cite{DSh2}. The following
 are \cite[3.1]{DSh2}  and  \cite[2.5]{DSh2} respectively.

\begin{lemma} 
Assume\label{most-useful-lemma}  
that $b\in V\cap [\Naturals]^\omega$ is such that
$F\restriction [V\cap [b]^\omega]$ is $\sigma$-Borel in $V[G]$. Then,
  in $V$, there is an 
increasing sequence $\{ n_k  : k \in \omega\}\subset \omega$ 
such that $F$ is trivial 
on each $a\in [b]^\omega$ for which there is an $r\in
  \mathfrak F$, such that $a\subset \bigcup \{
 [n_k,n_{k+1}) : 
[n_k,n_{k+1})\subset  \dom(r)\}$.
\end{lemma}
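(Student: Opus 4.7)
The plan is to extract from the $\sigma$-Borel witnesses $\{\psi_n : n \in \omega\}$ (living in $V[H][G]$) enough local information about how $F$ behaves on subsets of $b$ to produce, back in $V$, the desired partition $\{n_k\}$. I would work first in $V[H][G]$ to analyse the $\psi_n$'s, then reflect the structure back to $V$ using the preservation properties of $\Poset(\mathfrak F)$ (ccc, $\omega^\omega$-bounding, and non-meagerness of $\pomega \cap V$) together with the $V$-genericity of $\mathfrak F$ on $\Poset$.

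First, in $V[H][G]$: each Borel function $\psi_n : \pomega \to \pomega$ is continuous on a dense $G_\delta$ set $C_n$. Because $\Poset(\mathfrak F)$ preserves that $\pomega \cap V$ is non-meager, enough $V$-sets land in each relevant $C_n$ that one loses nothing by replacing $\psi_n$ with its continuous restriction on a $V$-dense comeager subset. A continuous function on such a set has a modulus $N_n : \omega \to \omega$ so that $\psi_n(c) \cap m$ is determined by $c \cap N_n(m)$, and $\omega^\omega$-bounding then yields a single $N \in V[H]$ dominating every $N_n$.

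The second step is the reflection down to $V$. For each $r \in \Poset$, one considers the $V$-level statement that $r$ decides a Borel code for $F \restriction V \cap [b \cap \dom(r)]^\omega$ up to finite error, together with its modulus. Because $\mathfrak F$ is $V$-generic on $\Poset$, these decisions are already witnessed by ground-model conditions; using a fusion-style diagonalization in $V$ along a dense sequence of $r$'s, I would build an increasing $\{n_k\} \in V$ coarse enough that for every $r \in \Poset$, the intervals $[n_k, n_{k+1}) \subset \dom(r)$ admit a local trivialization read off from the Borel code attached to $r$. Given $a \in [b]^\omega$ and $r \in \mathfrak F$ with $a$ covered by intervals $[n_k, n_{k+1}) \subset \dom(r)$, the trivializing $h \in \mathbb N^\mathbb N$ is then assembled locally: on each such interval, define $h \restriction [n_k, n_{k+1})$ from the action of the Borel function $\psi_n$ selected by $r$ on singletons in that interval. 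The partition having been chosen coarser than the modulus ensures $F(c) =^* h[c]$ for every $c \subset a$.

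The main obstacle is the reflection step: guaranteeing that $\{n_k\}$ really lives in $V$, not merely in $V[\mathfrak F]$ or $V[H]$. This requires simultaneously exploiting that $\Poset(\mathfrak F)$ is ccc and $\omega^\omega$-bounding (so moduli descend well) and that $\mathfrak F$ is $V$-generic on $\Poset$ (so a single $V$-sequence suffices for every future $r \in \mathfrak F$). The delicate point is that the Borel function that ultimately applies to a set $a$ is chosen by $\mathfrak F$, so the $V$-partition must be robust across all possible such choices. This is where the methodology of \cite{step.28,step.29}, combined with Lemmas \ref{Pideal} and \ref{sigmaBorel}, does the heavy lifting.
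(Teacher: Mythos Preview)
The paper does not prove this lemma: it is quoted verbatim from \cite[3.1]{DSh2} (see the sentence immediately preceding the statement), so there is no in-paper argument to compare against. Your outline does identify the right ingredients that drive the argument in \cite{DSh2} and \cite{step.28,step.29}: pass from Borel to continuous on a comeager set, use $\omega^\omega$-bounding to control moduli of continuity, use non-meagerness of $V\cap\pomega$ to guarantee $V$-sets land in the comeager set, and use $V$-genericity of $\mathfrak F$ to reflect the interval structure down to $V$.

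Two points in your sketch are not yet proofs. First, ``define $h\restriction[n_k,n_{k+1})$ from the action of $\psi_n$ on singletons'' does not work as stated: $\psi_n$ is a function on $\pomega$, and there is no reason $\psi_n(\{i\})$ is a singleton, nor that the same index $n$ serves every $\{i\}$. The actual argument recovers $h$ by comparing $\psi_n(c)$ with $\psi_n(c')$ for sets $c,c'$ differing in a single coordinate inside one interval, using continuity and the choice of modulus to see that only that interval matters; this is where the coarsening of $\{n_k\}$ past the modulus is essential. Second, as you yourself flag, the reflection step is the crux and your sketch does not carry it out: the Borel codes and their moduli live in $V[H][G]$, and one needs a genuine fusion argument inside $V$ (using density in $\Poset$) to produce a single $\{n_k\}\in V$ that works uniformly for all $r\in\mathfrak F$. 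Deferring this to ``the methodology of \cite{step.28,step.29}'' is accurate as a citation but is exactly the content of the lemma, so it cannot be invoked as a black box here. Consulting \cite[3.1]{DSh2} directly will show how these two steps are made precise.
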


\begin{lemma} 
Let\label{also-useful-lemma}   $H$ and  $\mathfrak F$ be as above.
Then for each $\Poset(\mathfrak
  F)$-name $\dot h \in \Naturals^\Naturals$ there are
an increasing sequence $n_0<n_1<\cdots$ of integers and a condition
$  f\in \mathfrak F$  such that 
either
\begin{enumerate}
\item  
$ f\forces{\Poset{(\mathfrak F)}}{\dot 
  h\restriction \bigcup \{ [n_k,n_{k+1}) : k\in K\}
 \notin V}$ for each infinite $K\subset \omega$
or 
\item
for each $i\in [n_k,n_{k+1})$
and  each $g<  f$ such that $g$ forces a\label{two.preserve1} 
value on $\dot h(i)$, 
$  f  \cup (g \restriction[n_k,n_{k+1}))$ also forces
 a value on $\dot h(i)$.
\end{enumerate}
Furthermore, if $f$ forces $\dot h$ to be finite-to-one, we can 
arrange that for each $k$ and each $i\in [n_k,n_{k+1})$,
$  f$ forces that $\dot h(i)\in
  [n_{k-1},n_{k+2})$. 
\end{lemma}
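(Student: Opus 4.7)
My plan is a fusion-type construction that produces $f$ and the sequence $(n_k)$ simultaneously, together with a dichotomy between ``local reading'' of $\dot h$ (case (2)) and genuine genericity (case (1)). Throughout I will use that $\Poset$ is $\sigma$-directed closed, that $\mathfrak F$ is a $\sigma$-directed $\Poset$-generic filter over $V$, and that $\Poset(\mathfrak F)$ is $\omega^\omega$-bounding. To begin, if some $f_0\in\mathfrak F$ forces $\dot h\in V$ then (2) is automatic, so I may assume otherwise. Using $\omega^\omega$-boundedness I shrink to $f_0\in\mathfrak F$ and choose $m\in V\cap \Naturals^\Naturals$ with $f_0\Vdash\dot h(i)<m(i)$ for all $i$, which controls the combinatorial complexity at each stage.

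The main construction attempts (2) recursively. At stage $k$, given $f_k\in\mathfrak F$ and $n_k$, I search for $n_{k+1}>n_k$ and $f_{k+1}\in\mathfrak F$ extending $f_k$ such that for every $i\in[n_k,n_{k+1})$ and every $g\leq f_{k+1}$ in $\Poset(\mathfrak F)$ deciding $\dot h(i)$, the amalgam $f_{k+1}\cup(g\restriction[n_k,n_{k+1}))$ also decides $\dot h(i)$ to the same value. If the search succeeds at every stage, I take a common lower bound $f\in\mathfrak F$ of all the $f_k$ using the $\sigma$-directedness of $\mathfrak F$, and (2) holds by construction.

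If the search fails at some stage $k_0$, then for every admissible $f_{k_0+1}$ and $n_{k_0+1}$ there exist $i\in[n_{k_0},n_{k_0+1})$ and some $g$ deciding $\dot h(i)$ with the amalgam failing to decide. Translated back into $V$, this says the values of $\dot h$ on the block genuinely depend on behavior outside $[n_{k_0},n_{k_0+1})$. I then switch strategies and build $f$ and $(n_k)$ so that every block witnesses such outside dependence; a coding argument across the blocks, using the genericity of $\mathfrak F$ over $V$, shows that the restriction of $\dot h$ to $\bigcup_{k\in K}[n_k,n_{k+1})$ cannot lie in $V$ for any infinite $K$, delivering (1).

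For the finite-to-one addendum, I apply $\omega^\omega$-bounding once more to a $V$-function bounding the largest $i$ with $\dot h(i)<n$ (which is finite by the finite-to-one hypothesis), and then thin $(n_k)$ to grow fast enough that $f\Vdash\dot h(i)\in[n_{k-1},n_{k+2})$ for each $i\in[n_k,n_{k+1})$. The step I expect to be the main obstacle is the dichotomy itself: a single block witnessing non-local dependence is not enough, and I must ensure the failure-case construction delivers witnesses on \emph{every} block whose encoded information is sufficiently independent that no ground-model function can recover the restriction of $\dot h$ to an arbitrary infinite sub-family of blocks. This is where the fine structure of $\mathfrak F$, as a maximal filter chosen via the $2^{<\omega_1}$-forcing $H$, does the essential work.
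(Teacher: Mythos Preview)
The paper does not prove this lemma: it is quoted verbatim as \cite[2.5]{DSh2} and used as a black box, so there is no in-paper argument to compare against.

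Judged on its own, your outline has the right architecture for case~(2) and for the finite-to-one addendum, but the dichotomy step is a genuine gap, not merely a detail you have deferred. Failure of your stage-$k_0$ search says only that for the \emph{particular} pair $(f_{k_0},n_{k_0})$ already chosen, no $n_{k_0+1}$ and $f_{k_0+1}\in\mathfrak F$ give local reading on $[n_{k_0},n_{k_0+1})$. It does not say the failure persists under strengthening $f_{k_0}$ or shifting $n_{k_0}$, and it tells you nothing about other blocks. Case~(1), by contrast, demands that for a single $f$ and a single sequence $(n_k)$, \emph{every} infinite $K$ yields $f\Vdash \dot h\restriction\bigcup_{k\in K}[n_k,n_{k+1})\notin V$. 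Bridging these is the entire content of the lemma. Your ``coding argument across the blocks'' and your appeal to ``the fine structure of $\mathfrak F$ via $2^{<\omega_1}$'' are placeholders, not arguments: you have not specified what is being coded, why the codes on different blocks are independent, or why a ground-model function cannot read off the restriction for some infinite $K$. Concretely, what is needed in the failure branch is to arrange, at each block, a specific witness (typically two local extensions of $f$ forcing distinct values on some $\dot h(i)$ in that block) and then to argue that any $h'\in V$ must disagree with $\dot h$ on infinitely many of these witnesses below some extension. That mechanism is absent.

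There is also a smaller technical point in the success branch: the $\sigma$-directed lower bound $f\in\mathfrak F$ of the $f_k$'s satisfies only $f\supseteq^* f_k$, not $f\supseteq f_k$, so you must organise the fusion so that the $f_k$ stabilise on initial segments (e.g.\ $f_{k+1}\restriction n_k = f_k\restriction n_k$) before you can conclude that the block-local reading survives to $f$.
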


\bigskip

Next we need to  use a key  Lemma from \cite[Lemma 3.1]{DSh2}.

\begin{lemma} There is a condition $\bar p\in \mathfrak F$ and an\label{mainlemma}
increasing sequence $\{ n_k  : k \in \omega\}\subset \mathbb N$ 
such that $\bar p$ forces (over $\Poset$) that  $\triv(F) $ contains all 
$a\subset \mathbb N$ for which there is an  
 $r
\in
  \mathfrak F$, such that $a\subset \bigcup \{
 [n_k,n_{k+1}) : 
[n_k,n_{k+1})\subset  \dom(r)\}$.
\end{lemma}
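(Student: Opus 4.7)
The plan is to derive this as a direct consequence of Lemmas~\ref{sigmaBorel} and~\ref{most-useful-lemma} together with the forcing theorem. First, Lemma~\ref{sigmaBorel} guarantees that in the extension $V[H][G]$ by $\Poset(\mathfrak F)$, the restriction $F\restriction (V\cap \mathcal P(\mathbb N))$ is $\sigma$-Borel. Taking $b=\mathbb N$ in Lemma~\ref{most-useful-lemma}, whose hypothesis is therefore satisfied, we obtain in $V$ an increasing sequence $\{n_k:k\in\omega\}\subset\mathbb N$ with the property that every $a\in[\mathbb N]^\omega$ for which some $r\in\mathfrak F$ witnesses $a\subset\bigcup\{[n_k,n_{k+1}):[n_k,n_{k+1})\subset\dom(r)\}$ lies in $\triv(F)$. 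This conclusion holds in $V[\mathfrak F]$.

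Next, I would translate this into a forced statement over $\Poset$. The displayed property is expressible in the $\Poset$-forcing language using the name $\dot\Phi$ (whence $\dot{\triv}(\dot\Phi)$), the fixed ground-model sequence $\{n_k\}$, and the canonical name for the $\Poset$-generic filter. Since it holds in $V[\mathfrak F]$, the truth lemma supplies a condition $\bar p\in\mathfrak F$ which forces it over $\Poset$, yielding the desired conclusion.

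The main, and really only, substantive step is the application of Lemma~\ref{most-useful-lemma}; the extraction of $\bar p$ is purely a matter of invoking the truth lemma. The universal quantifications over the witnesses $a$ and $r$ cause no difficulty, since the entire conclusion is a single first-order sentence of the $\Poset$-forcing language and is therefore forced by some condition in $\mathfrak F$ as soon as it is true in $V[\mathfrak F]$.
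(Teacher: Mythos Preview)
Your proposal is correct and matches the paper's approach. The paper does not give an independent proof of this lemma but simply cites it as \cite[Lemma 3.1]{DSh2}; however, the paper deliberately states Lemmas~\ref{sigmaBorel} and~\ref{most-useful-lemma} just before it precisely so that the present lemma is seen to follow by taking $b=\mathbb N$ in Lemma~\ref{most-useful-lemma} (its hypothesis being supplied by Lemma~\ref{sigmaBorel}) and then invoking the truth lemma to extract $\bar p\in\mathfrak F$, exactly as you do.
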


We are now ready to complete the proof 
of Theorem \ref{cccfin}. For the remainder of this
section we assume $\bar p$ and $\langle n_k : k\in\omega\rangle$
have the properties in Lemma \ref{mainlemma}.
We proceed by contradiction using the following Lemma.

\begin{lemma} If $p$ forces that $\triv(\dot\Phi)$ is not ccc over fin,
then there is $\bar p<p$ and\label{notcccoverfin}
almost disjoint families
 $\{ a_\alpha : \alpha <\omega_1\}\subset [\mathbb N]^\omega$ 
 and
 $\{ b_\alpha : \alpha <\omega_1\}\subset [\mathbb N]^\omega$  
 such that
 $\{ a_\alpha\cup b_\alpha : \alpha<\omega_1\}$ is also
 an almost disjoint family, and 
  $\bar p$ forces that $a_\alpha\notin\triv(F)$
  and 
 $F(a_\alpha)=b_\alpha$
 for each $\alpha < \omega_1$.
 \end{lemma}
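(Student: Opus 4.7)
The plan is to reduce the lemma to a ground-model combinatorial refinement using the forcing-theoretic regularity of $\Poset$, and then to invoke PFA's cardinal-invariant consequences. Since $\Poset$ is $\aleph_1$-closed and (under PFA) $\aleph_2$-distributive, it adds no new reals and no new $\omega_1$-sequences of reals. The hypothesis yields a $\Poset$-name $\{\dot a_\alpha : \alpha < \omega_1\}$ for an almost disjoint family whose members are forced to lie outside $\triv(\dot\Phi)$. Strengthen $p$ to $p_1$ deciding this sequence to equal a ground-model sequence $\{a_\alpha\} \in V$, and strengthen further to $p_2$ deciding each $F(a_\alpha) = b_\alpha$ with $b_\alpha \in V$. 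In $V$, both $\{a_\alpha\}$ and $\{b_\alpha\}$ are almost disjoint (the latter because $\Phi$, being an automorphism, preserves Boolean meets modulo finite).

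Working in $V$, after an initial pigeonhole refinement to an uncountable subfamily to secure the finite intersection property for the filter bases introduced below, form for each $\alpha$ the filter base
\[
\mathcal B_\alpha = \{a_\alpha\} \cup \{\mathbb N \setminus a_\beta : \beta \ne \alpha\} \cup \{\mathbb N \setminus b_\beta : \beta \ne \alpha\}.
\]
Since PFA implies $\mathfrak p = \aleph_2$, each $\mathcal B_\alpha$ admits a pseudo-intersection $x_\alpha \subseteq^* a_\alpha$ that is almost disjoint from every $a_\beta$ and every $b_\beta$ for $\beta \ne \alpha$. Set $a_\alpha' := x_\alpha$ and $b_\alpha' := F(a_\alpha') \subseteq^* b_\alpha$. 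A short computation using that $F$ preserves finite meets modulo finite shows that each of the four cross-intersections $a_\alpha' \cap a_\beta'$, $b_\alpha' \cap b_\beta'$, $a_\alpha' \cap b_\beta'$, $b_\alpha' \cap a_\beta'$ is finite for $\alpha \ne \beta$, so $\{a_\alpha' \cup b_\alpha'\}$ is almost disjoint as required.

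Two interacting points form the main obstacle. First, securing the FIP for $\mathcal B_\alpha$ requires $a_\alpha \not\subseteq^* \bigcup_{\beta \in S} b_\beta$ for every finite $S \subset \omega_1 \setminus \{\alpha\}$; when this fails for $\alpha$ with minimal witness $S_\alpha$, the almost disjointness of $\{b_\beta\}$ forces $\{\beta \ne \alpha : a_\alpha \cap b_\beta \text{ infinite}\} \subseteq S_\alpha$ to be finite, and a symmetric free-set recursion using these finite $S_\alpha$'s handles this alternative case directly (after a further subfamily refinement to symmetrize the relation). Second, and more delicately, one must preserve $a_\alpha' \notin \triv(F)$ under the shrinking $a_\alpha' = x_\alpha \subseteq^* a_\alpha$; since $\triv(F)$ is a hereditary ideal, this is not automatic. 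Here the interval partition $\{n_k\}$ of Lemma~\ref{mainlemma} enters, locating inside each $a_\alpha$ a concrete witness to non-triviality (infinitely many intervals $[n_k, n_{k+1})$ meeting $a_\alpha$ but failing to be captured by $\dom(r)$ for any $r \in \mathfrak F$); the pseudo-intersection $x_\alpha$ is arranged to retain infinitely many of these representatives. Combined with the dense $P$-ideal property of $\triv(F)$ from Lemma~\ref{Pideal}, this yields $x_\alpha \notin \triv(F)$. Taking $\bar p := p_2$ (suitably strengthened), the renamed families $\{a_\alpha' : \alpha < \omega_1\}$ and $\{b_\alpha' : \alpha < \omega_1\}$ satisfy the stated conclusions.
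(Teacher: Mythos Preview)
Your reduction to ground-model data via $\aleph_2$-distributivity is fine, and so is the observation that $\{b_\alpha\}$ is almost disjoint. The free-set branch of your case analysis (when each $a_\alpha$ is almost covered by finitely many $b_\beta$'s) also works and does not require any shrinking, so non-triviality is preserved there automatically.

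The genuine gap is in the pseudo-intersection branch, specifically your argument that $x_\alpha \notin \triv(F)$. Lemma~\ref{mainlemma} gives only a \emph{sufficient} condition for membership in $\triv(F)$: it says that any $a$ contained in some $a_r = \bigcup\{[n_k,n_{k+1}) : [n_k,n_{k+1}) \subset \dom(r)\}$ is trivial. It does \emph{not} say the converse, so ``retaining intervals not captured by any $\dom(r)$'' is not a witness to non-triviality. Likewise, the dense $P$-ideal property of $\triv(F)$ from Lemma~\ref{Pideal} works against you rather than for you: density means every infinite set (in particular your $x_\alpha$) has an infinite subset in $\triv(F)$, so it certainly cannot be invoked to keep $x_\alpha$ out of $\triv(F)$. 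Since $\triv(F)$ is hereditary, an arbitrary pseudo-intersection $x_\alpha \subseteq^* a_\alpha$ may very well land in $\triv(F)$, and nothing in your filter base $\mathcal B_\alpha$ prevents this. Enlarging $\mathcal B_\alpha$ by complements of $\triv(F)$-sets does not obviously help either, since you have no bound below $\aleph_2$ on the number of generators of $\triv(F)$.

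The paper handles exactly this obstacle by a different device: for each $\alpha$ it uses compactness of $a_\alpha^*$ to fix an ultrafilter $\mathcal W_\alpha$ with $a_\alpha \in \mathcal W_\alpha$ at which $F$ is not trivial (i.e.\ $\mathcal W_\alpha \cap \triv(F) = \emptyset$). Every subsequent shrinking of $a_\alpha$ is chosen to remain in $\mathcal W_\alpha$, which automatically keeps it out of $\triv(F)$. The cross-disjointness of $\{a_\alpha \cup b_\alpha\}$ is then obtained not by a single $\mathfrak p$-style pseudo-intersection but by a case analysis on whether $F(\mathcal W_\alpha) = \mathcal W_\alpha$, followed by a free-set recursion on the countable sets $S_\gamma = \{\alpha : a_\gamma \in F(\mathcal W_\alpha) \text{ or } b_\gamma \in \mathcal W_\alpha\}$ and a closure argument in $\beta\mathbb N$. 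The ultrafilter $\mathcal W_\alpha$ is the missing idea; once you have it, your $\mathfrak p = \aleph_2$ machinery is not needed.
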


\begin{proof}
 Using that $\Poset$ is
$\aleph_2$-distributive we then have that $\triv(F)$ is not ccc over
fin.  Also, we can assume that $p$ forces the following.
The almost disjoint family $\{a_\alpha: \alpha <\omega_1\}
\subset [\mathbb N]^{\omega}$  satisfies
that $a_\alpha\notin\triv(F)$ 
and that $F(a_\alpha)=b_\alpha$ for all
$\alpha\in\omega_1$. 
Notice that the family
 $\{ b_\alpha : \alpha\in\omega_1\}$
 is also an almost disjoint family.
 By compactness of $a_\alpha^*$, we may choose an ultrafilter
  $\mathcal W_\alpha$ on $\mathbb N$ so 
  that $a_\alpha\in \mathcal W_\alpha$ and so
  that   $F$ is not trivial at $\mathcal W_\alpha$. 
  If there is an uncountable set of $\alpha$ such
  that $F(\mathcal W_\alpha)=\mathcal W_\alpha$,
   then we pass to such an uncountable subcollection
as well as shrink each $a_\alpha$ so that 
the new $b_\alpha$ is a subset of the original $a_\alpha$. 
 
 So we now assume that $F(\mathcal W_\alpha)\neq \mathcal W_\alpha$
 for all $\alpha\in\omega_1$. 
 Next, for each $\gamma\in\omega_1$, let 
   $S_\gamma$ be the set of all $\alpha\in \omega_1$
   such that either $a_\gamma\in F(\mathcal W_\alpha)$ or
    $b_\gamma \in \mathcal W_\alpha$. If, there is some 
     $\gamma\in \omega_1$ such that $S_\gamma$
     is uncountable, then we can, by shrinking
      $a_\alpha$ for all $\alpha\in S_\gamma$
      ensure that either $a_\alpha\subset F(a_\gamma)$
      or $b_\alpha\subset a_\gamma$. In either
      case, we obtain a new family, 
      $a_\alpha\in \mathcal W_\alpha$
      for $\alpha\in S_\gamma$,
      such that 
        $\{  a_\alpha\cup b_\alpha : \alpha \in S_\gamma\}$
        is an almost disjoint family.        
        Otherwise, we have that $S_\gamma$ is countable for
        every $\gamma\in \omega_1$. Recursively choose
        an uncountable subcollection $\{ \alpha_\xi : \xi\in\omega_1\}
        \subset\omega_1$ so that 
         $\alpha_\xi \notin S_{\alpha_\eta}$ for all $\eta<\xi$.        
First suppose there is a $\delta<\omega_1$ such that
there is an uncountable set $S\subset\omega_1$
with each $\mathcal W_{\alpha_\xi}$ (for $\xi\in S$)
 being in the $\beta\mathbb N$ closure of the union
  of the family of clopen sets $\{ b_{\alpha_\eta}^*  : \eta<\delta\}$. 
            It then follows that, for all $\xi\in S$, $\mathcal W_{\alpha_\xi}$
            is not in the closure of the family
            $\{ b_{\alpha_\eta}^* : \delta\leq \eta <\xi\}$. 
            For each $\xi\in S\setminus\delta$, 
            replace $a_{\alpha_\xi}$ be a smaller
             $a_{\alpha_\xi}\in \mathcal W_{\alpha_\xi}$
             satisfying that $a_{\alpha_\xi}\cap b_{\alpha_\eta}$
             is finite for all $\delta\leq \eta<\xi$. 
           If there is no such $\delta<\omega_1$, 
           then we can assume, by passing to an uncountable subfamily,
           that $\mathcal W_{\alpha_\xi}$ is not in the closure
           of the union of the family
           $\{ b_{\alpha_\eta}^*  : \eta<\xi\}$. In this case we
           may also assume that each $a_{\alpha_\xi}\in
            \mathcal W_{\alpha_\xi}$ is mod finite disjoint
            from $b_{\alpha_\eta}$ for all $\eta<\xi$.                       
                      By symmetry, we may perform the same
                      reduction so that for all $\xi<\omega_1$,
                       $b_{\alpha_\xi}$ is almost disjoint
                       from $a_{\alpha_\eta}$ for all
                        $\eta<\xi$.  
                        
                        We have now shown that the family
                         $\{ a_{\alpha}\cup b_\alpha : \alpha < \omega_1\}$
                         can be assumed to be almost disjoint.  
                         It is not needed but it is interesting that
                         since this argument can take place
                         in the ground model of PFA,
                          we have, by a result of Shelah reported
                          in \cite[3.11]{Dowomega1}, 
                         that we may assume that 
                         either $F(\mathcal W_\alpha)=\mathcal W_\alpha$
                         for all $\alpha\in\omega_1$ or
                         the sets $\{ \mathcal W_\alpha : \alpha \in\omega_1\}$
                         and $\{ F(\mathcal W_\alpha : \alpha\in \omega_1\}$
                         have disjoint closures in $\beta\mathbb N$.
  \end{proof}
  
  Finally, 
by 
possibly thinning out the collection, with
 $F(a_\alpha)  = b_\alpha$, assume that $b_\alpha$ is 
not a member of $\mathcal W_\beta$ for all $\beta \neq \alpha$.
Fix any $p\in \mathcal F$ which forces,
over $\Poset$,  that $\dot \Phi$ and, therefore $F$ have
all of the
above properties.
Assume also that $p$ satisfies the requirement
in Lemma \ref{mainlemma}.

Notice that for each $q\in \mathfrak F$, we have a 
one-to-one function $h_q$ with
domain 
 $a_q = \bigcup \{ [n_k,n_{k+1} ) : [n_k,n_{k+1})\subset \dom(q)\}$
which witnesses that  
$a_q\in \triv(F)$. 
Therefore the family 
 $\{ h_q : q\in \mathfrak F\}$ is a $\sigma$-directed (mod finite) 
family of functions 
 which, because $\mathbb N\notin\triv(F)$ in $V[H]$,
  has no extension in $V[H]$.

\begin{lemma} The family $\{ \dom(h_q) : q\in \mathfrak F\}$ generates 
  a dense ideal in $V[H]$ which remains dense after forcing with
   $\Poset(\mathfrak F)$.
\end{lemma}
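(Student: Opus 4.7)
The plan is to verify the two claims separately: density of the ideal $\mathcal{J}$ generated by $\{\dom(h_q):q\in\mathfrak{F}\}$ in $V[H]$, and preservation of this density under forcing with $\Poset(\mathfrak{F})$. Throughout, $\{n_k\}$ refers to the sequence fixed by Lemma \ref{mainlemma}, so $\dom(h_q)=a_q=\bigcup\{[n_k,n_{k+1}) : [n_k,n_{k+1})\subseteq\dom(q)\}$.

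For density in $V[H]$: since $2^{<\omega_1}$ is countably closed, $\mathcal{P}(\mathbb{N})^{V[H]}=\mathcal{P}(\mathbb{N})^V$, so every infinite $b\subseteq\mathbb{N}$ in $V[H]$ already lies in $V$. It suffices to show that $D_b=\{q\in\Poset : b\cap a_q \text{ is infinite}\}$ is dense in $\Poset$. Given $q_0\in\Poset$, infinitely many blocks $[n_k,n_{k+1})$ meet $b$; I would extend $q_0$ by adjoining infinitely many such blocks to its domain, scattered sparsely across the $[2^m,2^{m+1})$-intervals so as to preserve the growth condition, assigning trivial values to the new domain points (value $0$ for $\Poset_0,\Poset_1$, or internal pairings within a dyadic interval for $\Poset_2$). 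By $\Poset$-genericity of $\mathfrak{F}$ over $V$, some $q\in\mathfrak{F}\cap D_b$ exists, and $b\cap a_q\in\mathcal{J}$ is the desired infinite subset of $b$.

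For preservation in $V[H][G]$: let $\dot b$ be a $\Poset(\mathfrak{F})$-name for an infinite subset of $\mathbb{N}$ and $\dot e$ its strictly-increasing enumeration, which is forced to be finite-to-one. Apply Lemma \ref{also-useful-lemma} with its ``furthermore'' clause to $\dot e$, producing $f\in\mathfrak{F}$ and an increasing sequence $\ell_0<\ell_1<\cdots$ with $f\Vdash \dot e(i)\in[\ell_{k-1},\ell_{k+2})$ for every $i\in[\ell_k,\ell_{k+1})$. Thin to indices $k_0<k_1<\cdots$ spaced far enough apart that the intervals $C_j:=[\ell_{k_j-1},\ell_{k_j+2})$ are pairwise disjoint; then $f\Vdash\dot b\cap C_j\supseteq \dot e([\ell_{k_j},\ell_{k_j+1}))\neq\emptyset$, so $f$ forces $\dot b\cap B$ to be infinite for the ground-model set $B:=\bigcup_j C_j$. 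Applying the density argument in $V[H]$ to $B$ below $f$, I obtain $q\in\mathfrak{F}$ with $q\supseteq f$ and $B\cap a_q$ infinite; the block-decided form of $\dot e$ on $[\ell_k,\ell_{k+1})$ then forces $\dot b\cap a_q$ to be infinite as well.

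The main technical obstacle is reconciling the two relevant block partitions: the $\{n_k\}$ of Lemma \ref{mainlemma} that governs $a_q$, and the $\{\ell_k\}$ produced by Lemma \ref{also-useful-lemma} that controls $\dot e$. The cleanest resolution is to strengthen (the proof of) Lemma \ref{also-useful-lemma} to arrange $\{\ell_k\}$ as a subsequence of $\{n_k\}$, so that each block $[\ell_{k_j},\ell_{k_j+1})$ is a union of full $[n_k,n_{k+1})$-blocks and hence uniformly lies inside $a_q$ for a suitable $q\in\mathfrak{F}$. A secondary technical point is the first alternative of Lemma \ref{also-useful-lemma}, which either cannot arise for the enumeration of a forced-infinite set or must be neutralized by a symmetric density argument before the main construction proceeds.
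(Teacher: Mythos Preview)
Your approach is far more involved than the paper's, and it has real gaps. The paper's proof is two lines: let $\pi$ be the finite-to-one map sending $[n_k,n_{k+1})$ to $k$; then $\pi[a_q]=\{k:[n_k,n_{k+1})\subset\dom(q)\}$, and by $\Poset$-genericity of $\mathfrak F$ the family $\{\pi[a_q]:q\in\mathfrak F\}$ generates a \emph{maximal} ideal on $\omega$ (its dual is an ultrafilter). The ideal generated by $\{a_q\}$ is exactly the $\pi$-preimage of this maximal ideal, hence dense. For preservation one only needs that $\Poset(\mathfrak F)$, being $\omega^\omega$-bounding, adds no pseudointersection to a ground-model ultrafilter; this is a standard fact and immediately gives that the preimage ideal stays dense.

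Your preservation argument, by contrast, leans on Lemma~\ref{also-useful-lemma} and leaves two genuine holes. First, you wave away alternative~(1) of that lemma, but nothing about $\dot e$ being the increasing enumeration of a forced-infinite set rules out that $f$ forces $\dot e\restriction\bigcup_{k\in K}[\ell_k,\ell_{k+1})\notin V$ for every infinite $K$; you would need a separate argument here, and you do not supply one. Second, even granting alternative~(2) with the ``furthermore'' clause, your last step does not go through as written: from ``$\dot b\cap C_j\neq\emptyset$ for all $j$'' and ``$B\cap a_q$ is infinite'' you cannot conclude ``$\dot b\cap a_q$ is infinite'', because the (undecided) element of $\dot b$ landing in $C_j$ need not fall in $a_q\cap C_j$. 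To repair this you would need infinitely many $C_j$ \emph{entirely contained} in $a_q$, which in turn forces you to arrange $\{\ell_k\}$ as a subsequence of $\{n_k\}$ and then run a further genericity argument to absorb whole $C_j$'s into $\dom(q)$. All of this is plausible but unwritten, and it is a lot of machinery compared to the one-line reduction via $\pi$ to preservation of an ultrafilter under $\omega^\omega$-bounding forcing.
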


\begin{proof} The  finite-to-one map sending $[n_k,n_{k+1})$ to $k$ is
    easily seen to send the family $\{ a_q  : q\in \mathfrak F\}$
    to a maximal ideal, 
and it is also the preimage of this maximal ideal.
 The forcing $\mathbb P(\mathfrak F)$ is 
$\omega^\omega$-bounding and so does not diagonalize the dual
    ultrafilter. 
\end{proof}
 
 We prove,
 using this next well-known result, that
 there is a $\Poset(\mathfrak F)$-name,   $\dot h$, of
  a function in $\mathbb N^{\mathbb N}$ that is forced 
  to mod finite extend every member
  of $\{h_q : q\in \mathfrak F\}$.  
Uncountable pairwise incompatible families
of partial functions on $\mathbb N$ are
similar to Luzin gaps.
Such a family will not have a
common mod finite extension.  The following result is an easy
consequence of a result of 
Todorcevic (see  \cite[2.2.1]{Farah00}
and \cite{DoSiVa}). Proper posets were introduced
in \cite{shelah.pf}.

\begin{proposition} If\label{functiongaps} 
$\{ h_\alpha : \alpha \in \omega_1\}$ is a family
of 
partial functions on $\mathbb N$ with mod finite increasing domains,
and if  there is no
common mod finite extension, then there is a proper poset 
 which introduces an uncountable pairwise incompatible subfamily.
\end{proposition}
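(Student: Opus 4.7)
The plan is to apply a theorem of Todorcevic on Luzin-type gaps, in the form given by \cite[2.2.1]{Farah00} (see also \cite{DoSiVa}), after recoding the family of partial functions as a pregap.

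Fix a bijection between $\mathbb N$ and $\mathbb N \times \mathbb N$ and, for each $\alpha \in \omega_1$, put
\[
 a_\alpha = \{\, (n, h_\alpha(n)) : n \in \dom(h_\alpha)\,\}, \qquad
 b_\alpha = \{\, (n, k) : n\in \dom(h_\alpha),\ k\ne h_\alpha(n)\,\}.
\]
Then $a_\alpha \cap b_\alpha = \emptyset$, so $\{(a_\alpha, b_\alpha) : \alpha \in \omega_1\}$ is a pregap. Moreover, $h_\alpha$ and $h_\beta$ are mod finite compatible on $\dom(h_\alpha) \cap \dom(h_\beta)$ if and only if both $a_\alpha \cap b_\beta$ and $a_\beta \cap b_\alpha$ are finite; consequently an uncountable pairwise incompatible subfamily of $\{h_\alpha\}$ corresponds precisely to an uncountable Luzin-type subfamily of the pregap, i.e., one in which, for all $\alpha \ne \beta$ in the subfamily, at least one of $a_\alpha \cap b_\beta$, $a_\beta \cap b_\alpha$ is infinite.

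Next, verify that the pregap admits no interpolant, i.e.\ no $X \subset \mathbb N \times \mathbb N$ with $a_\alpha \subseteq^* X$ and $X \cap b_\alpha =^* \emptyset$ for all $\alpha \in \omega_1$. Given such an $X$, the two constraints force that, for each $\alpha$ and all but finitely many $n \in \dom(h_\alpha)$, the fibre $\{\, k : (n,k) \in X\,\}$ is exactly $\{h_\alpha(n)\}$. Since the domains $\dom(h_\alpha)$ are mod finite increasing, pick a countable cofinal subfamily indexed by some $\alpha_n \uparrow \omega_1$ and let $D = \bigcup_n \dom(h_{\alpha_n})$; on a cofinite subset of $D$ the set $X$ is then the graph of a partial function $h^*$, and by the previous observation $h^* =^* h_\alpha$ on $\dom(h_\alpha)$ for every $\alpha$, contradicting the hypothesis that no common mod finite extension exists. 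Applying the theorem of Todorcevic now produces a proper poset whose generic extracts an uncountable $S \subset \omega_1$ satisfying the Luzin-type property for the pregap, and by the translation above $\{h_\alpha : \alpha \in S\}$ is pairwise incompatible.

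The main technical point will be the extraction of the function $h^*$ from a hypothetical interpolant $X$: a priori $X$ need not be a graph, and at points $n$ not yet covered by the early members of the cofinal subfamily there is no obvious way to single out one value. One must verify that the coherence forced by the mod finite increasing $a_\alpha$'s indeed pins $X$ down to a single value mod finitely on the cofinal domain $D$. Once this reduction is in place, the conclusion is a direct invocation of the proper forcing from \cite[2.2.1]{Farah00}.
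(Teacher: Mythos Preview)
Your reduction is exactly what the paper intends: it gives no proof at all, only the remark that the result ``is an easy consequence of a result of Todorcevic (see \cite[2.2.1]{Farah00} and \cite{DoSiVa}).'' Coding each $h_\alpha$ as the pregap pair $(a_\alpha,b_\alpha)$ with $a_\alpha$ the graph and $b_\alpha$ the anti-graph is precisely the standard way to make that citation do the work, and the translation between the Luzin condition and incompatibility of the functions is correct (note that $a_\alpha\cap b_\beta$ and $a_\beta\cap b_\alpha$ are in bijection with the same set of disagreement points, so one is non-empty iff the other is; the paper's condition (6) needs only a single disagreement, so the ``non-empty'' Luzin condition suffices and there is no need to phrase it as ``infinite'').

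There is, however, a genuine slip in the interpolant paragraph. You write ``pick a countable cofinal subfamily indexed by some $\alpha_n\uparrow\omega_1$,'' but the domains form a $\subseteq^*$-increasing $\omega_1$-chain, and $\omega_1$ has uncountable cofinality, so no such cofinal countable subfamily exists. Fortunately this step is unnecessary. You have already established that for every $\alpha$ and all but finitely many $n\in\dom(h_\alpha)$, the fibre $X_n=\{k:(n,k)\in X\}$ equals $\{h_\alpha(n)\}$. So simply define $h^*$ on all of $\mathbb N$ by letting $h^*(n)$ be any element of $X_n$ when $X_n\neq\emptyset$ and $h^*(n)=0$ otherwise; then $h^*\restriction\dom(h_\alpha)=^* h_\alpha$ for every $\alpha$, which is the desired common mod finite extension. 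No cofinal sequence, and no worry about whether $X$ is globally a graph, is needed.
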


 So now we assume that forcing with $\Poset(\mathfrak F)$
 preserves that the family $\{ h_q : q\in \mathfrak F\}$ has
 no common mod finite extension.  Let $\dot q_{\omega_1}$
 denote the $\Poset(\mathfrak F)$-name of the function
 that equals the union of the generic filter $G$.  
 We note that the sequence
   $\{ n_k : k\in\omega\}$ is a sequence from $V$.  
  Let $\dot Q$ be
 the $\Poset(\mathfrak F)$-name of the proper poset 
 as described in Lemma \ref{functiongaps}. That is,
 there is a $2^{<\omega_1}*\Poset(\mathfrak F)*\dot Q$-name,
   $\{ (\dot q_\alpha, 
   \dot a_\alpha , \dot h_\alpha,\dot L_\alpha) : \alpha\in \omega_1\}$,
   satisfying that, for each $\alpha < \beta \in \omega_1$, 
   it is forced that 
   \begin{enumerate}
    \item $\{  
    \dot q_\alpha, 
   \dot a_\alpha , \dot h_\alpha
    : \alpha \in \omega_1\}$ are  $\Poset(\mathcal F)$-names,
    \item 
    $\dot q_\alpha \subset \dot q_{\omega_1}  $,
  
    \item $\dot a_\alpha =\bigcup\{ [n_k, n_{k+1}) 
      : k\in \dot L_\alpha\}\subset \dom(\dot q_\alpha)$,
       \item $\dot q_\alpha$ forces that $\dot h_\alpha \in V$ induces 
       $\dot \Phi$ on $\dot a_\alpha$, 
      \item $\dot L_\alpha \subset^* \dot L_\beta$, 
   \item 
    there is an $n\in\dot a_\alpha\cap \dot a_\beta$ with 
    $\dot h_\alpha(n)\neq \dot h_\beta(n)$. 
   \end{enumerate}
   
Next let $\dot R_1$ be the
$2^{<\omega_1}*\Poset(\mathfrak F)*\dot Q$-name of
the
 usual $\sigma$-centered poset that will force the
 mod finite ascending sequence $\{ \dot L_\alpha : \alpha\in\omega_1\}$
  has a co-infinite extension.  That is, 
   there is a 
   $2^{<\omega_1}*\Poset(\mathfrak F)*\dot Q*\dot R_1$-name,
    $\dot L$, such that it is forced that $\mathbb N\setminus \dot L$
    is infinite and $\dot L_\alpha \subset^* \dot L$ for all
    $\alpha < \omega_1$. 
    And finally, let $\dot R_2$ be
    the
$2^{<\omega_1}*\Poset(\mathfrak F)*\dot Q*\dot R_1$-name
of the $\sigma$-centered poset such that there is a name,
evidently a  $2^{<\omega_1}*\Poset(\mathfrak F)*\dot Q*\dot R_1
*\dot R_2$-name, $\dot S$ of a subset of $\mathbb N$
such that it is forced
that  
$\dot q_{\omega_1}\restriction \dot S \in \Poset$, 
 $\bigcup\{[n_k,n_{k+1}) : k\in \dot L\} \subset \dot S$,
  $\dom(\dot q_\alpha)\subset^* \dot S$ for all
  $\alpha<\omega_1$. 
  
  Now return to the PFA model and choose a filter $\Gamma$ 
  on   
$2^{<\omega_1}*\Poset(\mathfrak F)*\dot Q*\dot R_1
*\dot R_2$ that meets $\omega_1$-many dense open sets
sufficient to ensure that properties
 (1)-(5) of the valuations of the names
  $\{ (\dot q_\alpha, \dot a_\alpha,\dot h_\alpha, \dot L_\alpha) : 
   \alpha<\omega_1\}$ will all hold
   and so that  the valuations of 
   $\dot L$, $\dot q_{\omega_1}$, and $\dot S$
   all have the properties mentioned above.  
   We may also assume that the condition $p$ of Lemma \ref{mainlemma}
   is an element of $\Gamma$ in the sense that
    the condition $(1,p,1,1,1)$ (as an element of
     $2^{<\omega_1}*\Poset(\mathfrak F)*\dot Q*\dot R_1
*\dot R_2$) is an element of $\Gamma$. Now let
 $\{ h_\alpha :\alpha \in \omega_1\}$ be the valuations
 of the elements of $\{ \dot h_\alpha : \alpha\in\omega_1\}$
 by the filter $\Gamma$. Also let $L$ be the valuation
 of $\dot L$ by $\Gamma$ and let
  $r$ be the valuation 
 of the condition $\dot q_{\omega_1}\restriction \dot S$
 by $\Gamma$. We observe that $a_r = \bigcup \{[n_k,n_{k+1}) : 
  k\in L\}$ is a subset of $\dom (r)$ and so, by Lemma \ref{mainlemma}
   there
  is a function $h_r$ (with domain $a_r$)
  that induces $F$ on $\mathcal P( a_r)$.  Choose an uncountable
  $\Lambda\subset\omega_1$ so that there is a $\bar k\in\omega$
  such that $L_\alpha\setminus L\subset \bar k$ and
   $h_\alpha \restriction \left(a_\alpha\setminus n_{\bar k}\right)
   \subset h_r$
 for all
   $\alpha\in \Lambda$. We may further suppose
   that $h_\alpha\restriction (a_\alpha\cap n_{\bar k})
    = h_\beta\restriction (a_\beta\cap n_{\bar k})$
    for all $\alpha,\beta\in \Lambda$. 
    It should now be clear that for all $\alpha,\beta\in \Lambda$,
     condition (6) does not hold.

  This contradiction therefore shows that there is a 
   $\mathbb P(\mathfrak
F)$-name  $\dot h$ 
  of a function on $\mathbb N$   that is forced
  to mod finite extend every member
  of $\{ h_q : q\in \mathfrak F\}$.

 For partial functions
 $p$ and $s$ with domains contained in $\mathbb N$, we let $s\sqcup p$
 denote the function $
s \cup (p\restriction (\dom(p)\setminus \dom(s)))$.  Since
 $\dot h$ is forced to mod finite extend each $h_q$ (for $q\in \mathfrak F$),
  we have that condition (1) of Lemma \ref{also-useful-lemma} fails to hold.
Furthermore, we can suppose that there is a $\bar p\supset \bar q\in \mathfrak F$
 such that 
 for each $k$, there is a
single $m_k$ such that $[2^{m_k},2^{m_k+1})$ properly contains
$ [n_k,n_{k+1})\setminus \dom(\bar q)$.    
     By further grouping and by
 extending the condition $\bar q $ we can assume that for all $k$ and
 $j\in [n_k, n_{k+1})$, if $\bar q $ does not force a value on $\dot
 h(j)$,
 then $\bar q $ does force that $\dot h(j) \in [n_k, n_{k+1})$.  
  We record this as a new Lemma.

\begin{lemma} 
It is forced by $\bar q<\bar p$ that the increasing sequence 
$n_0<n_1<\cdots$ of integers\label{preserve2}
 satisfies  that for each $k\in \mathbb N$ and
for each $i\in [n_k,n_{k+1})$ such 
that $\bar q$ does not force a value on $\dot h(i)$
and for each $q<\bar q$ that does force a
value on $\dot h(i)$, 
$ (q\restriction[n_k,n_{k+1}))\sqcup \bar q$ also forces
 that  value on $\dot h(i)$ and that value is in $
 [n_{k},n_{k+1})$. 
\end{lemma}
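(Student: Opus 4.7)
The plan is to deduce Lemma \ref{preserve2} from Lemma \ref{also-useful-lemma} applied to the name $\dot h$, using the density of the ideal generated by $\{\dom(h_q) : q \in \mathfrak F\}$ just established to rule out the first alternative, and then aligning the resulting data with the $\bar p$ and $\{n_k\}$ coming from Lemma \ref{mainlemma}.

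First I would apply Lemma \ref{also-useful-lemma} to $\dot h$, obtaining $f \in \mathfrak F$ and an increasing sequence $n_0 < n_1 < \cdots$ satisfying one of its two alternatives. Alternative (1) is ruled out as follows: every $q \in \mathfrak F$ forces (over $\Poset(\mathfrak F)$) that $\dot h \restriction \dom(h_q) =^* h_q$, and $h_q$ lies in the ground model of the $\Poset(\mathfrak F)$-forcing, so the restriction of $\dot h$ to $\dom(h_q)$ is already decided by $q$. By the preceding lemma, the family $\{\dom(h_q) : q \in \mathfrak F\}$ generates a dense ideal which remains dense in the $\Poset(\mathfrak F)$-extension, so one can find $q \leq f$ in $\mathfrak F$ and an infinite $K \subset \omega$ with $\bigcup\{[n_k, n_{k+1}) : k \in K\} \subset^* \dom(h_q)$---contradicting alternative (1). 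Hence alternative (2) is in force, giving the ``recombination'' property that $f \cup (g \restriction [n_k, n_{k+1}))$ forces the same value on $\dot h(i)$ whenever $g \leq f$ does.

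Next I would use the finite-to-one nature of $\dot h$---each $h_q$ is finite-to-one on $\dom(h_q)$ since it is a lifting of the automorphism $\Phi$, and the domains cover $\mathbb N$ mod finite---to invoke the ``furthermore'' clause of Lemma \ref{also-useful-lemma}, arranging that $f$ forces $\dot h(i) \in [n_{k-1}, n_{k+2})$ for every $i \in [n_k, n_{k+1})$. To tighten this to $\dot h(i) \in [n_k, n_{k+1})$, I would pass to a sparser subsequence and then use the $\sigma$-directed closure of $\mathfrak F$: strengthen $f$ transfinitely, at each stage deciding the value of any $\dot h(j)$ whose currently-allowed range still meets the ``spill'' just outside the relevant block, choosing a value inside that block. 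The compatibility built into alternative (2) ensures that these boundary decisions, made along whole $[n_k,n_{k+1})$-blocks, do not disturb the recombination property. Finally, take $\bar q \leq \bar p$ refining both the strengthened $f$ and the condition of Lemma \ref{mainlemma}, and take the final sequence $\{n_k\}$ to be a common refinement.

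The principal obstacle is this last narrowing of the range. Lemma \ref{also-useful-lemma} alone pins values only inside three adjacent intervals, and naive regrouping merely rescales this three-interval spill without eliminating it. Eliminating it genuinely requires the $\aleph_1$-closure of $\Poset$: one must strengthen $\bar q$ in a controlled $\omega_1$-many stages so as to kill each offending boundary value while simultaneously keeping clause (2) of Lemma \ref{also-useful-lemma} valid. This is precisely what the phrase ``further grouping and extending the condition $\bar q$'' in the paragraph preceding the lemma is doing.
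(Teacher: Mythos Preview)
Your overall outline matches the paper's: apply Lemma~\ref{also-useful-lemma}, rule out alternative~(1) because $\dot h$ mod finite extends each $h_q$, and then massage the sequence and the condition to turn the three-block range estimate into a one-block estimate. The first three steps are fine and essentially identical to the paper's argument in the paragraph preceding the lemma.

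The last step, however, is mischaracterized. You claim that eliminating the three-interval spill ``genuinely requires the $\aleph_1$-closure of $\Poset$'' and propose a transfinite recursion of length~$\omega_1$ in which at each stage you decide $\dot h(j)$ for offending~$j$, ``choosing a value inside that block.'' There are two problems. First, there are only countably many~$j$, so an $\omega_1$-length recursion makes no sense here, and in any case $\Poset$ is only $\sigma$-closed, not $\aleph_2$-closed. Second, and more seriously, you cannot in general \emph{choose} the value of $\dot h(j)$ to lie inside the block: the furthermore clause only bounds it within three blocks, and it may well be that every extension forces it into an adjacent block.

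The paper's actual maneuver is much simpler and avoids this issue entirely. One first extends $\bar q$ so that within each $[n_k,n_{k+1})$ the undecided part lies in a single interval $[2^{m_k},2^{m_k+1})$, and then extends $\bar q$ further so that only a sparse set of these intervals remains outside $\dom(\bar q)$. Now regroup: choose the new $n'_j$'s so that each surviving $[2^{m_k},2^{m_k+1})$ sits at least one old block away from either endpoint of its new block. Then any undecided $i$ lies in some $[n_k,n_{k+1})$ with $[n_{k-1},n_{k+2})\subset [n'_j,n'_{j+1})$, and the three-block estimate from Lemma~\ref{also-useful-lemma} already gives $\dot h(i)\in [n'_j,n'_{j+1})$. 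This is a single extension plus a single regrouping---no transfinite recursion, and no need to force any particular value on any $\dot h(j)$. Your diagnosis that ``naive regrouping merely rescales the spill'' is correct, but the cure is simply to thin the undecided regions \emph{before} regrouping, not to control the values themselves.
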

 
For each $k$, let  $
H_k = [n_k,n_{k+1})\setminus \dom(\bar q ) =
 [2^{m_k},2^{m_k+1})\setminus \dom(\bar q )$. 
 Finally, 
let  $\mathcal H_k $ 
 denote the set of functions  $s$ with domain contained in $H_k$
  for which  there is a $q \leq  \bar q $ with $s=q\restriction H_k$. 
   Recall that 
for the posets $\Poset_0$ and $\Poset_1$ the conditions are functions into
  $2$, while for the poset $\Poset_2$, the conditions $q$ extending $\bar q $
  are permutations which
  send each $H_k$ into itself. 
Therefore, with $\Poset$ being any of the three posets considered in this paper,
 $\mathcal H_k$ is a finite set of functions with domain and
  range contained in $2\cup H_k$.  
 For the remainder of the
 section we will assume that each condition we choose in $\mathbb
 P(\mathfrak F) $ is below this $\bar q $.

\begin{lemma} If $Y = \{ y_k : k\in \omega\}$ 
 is\label{grabY}
 such that $y_k \in
 [n_k,n_{k+1})$ for each $k$, then for each $
 q\in \Poset(\mathfrak F)$,
 there is a $p<q$ such that $p$ decides $\dot h\restriction Y$. 
\end{lemma}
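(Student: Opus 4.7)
\emph{Plan.} My approach is to build $p$ by independently extending $q$ on each block $H_k$ so as to decide $\dot h(y_k)$, and then glue these local extensions together. The enabling tool is Lemma~\ref{preserve2}, which reduces any forcing decision of $\dot h(y_k)$ for $y_k\in[n_k,n_{k+1})$ to the finite data of an element of $\mathcal H_k$.

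Without loss of generality, assume $q\leq\bar q$ in $\Poset(\mathfrak F)$. For each $k$, density in $\Poset(\mathfrak F)$ supplies an extension $q'_k\supseteq q$ forcing a value on $\dot h(y_k)$. Setting $s_k:=q'_k\restriction H_k\in\mathcal H_k$, Lemma~\ref{preserve2} tells us that $\bar q\cup s_k$ already forces the same value (which necessarily lies in $[n_k,n_{k+1})$), and of course $s_k\supseteq q\restriction H_k$. The natural candidate is
\[
p \;=\; q \;\cup\; \bigcup_{k<\omega} s_k,
\]
a function strictly extending $q$, since the $H_k$ are pairwise disjoint and each $s_k$ refines $q\restriction H_k$. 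Because $p\supseteq \bar q\cup s_k$ as conditions for every $k$, $p$ inherits the forcing decision and forces $\dot h(y_k)$ to the known value; hence $p$ decides $\dot h\restriction Y$.

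The main obstacle is verifying that this $p$ actually belongs to $\Poset(\mathfrak F)$. This splits into (i) $p\in\Poset$, i.e.\ the limsup growth condition $\limsup_n|[2^n,2^{n+1})\setminus\dom(p)|=\infty$, and (ii) the existence of $f\in\mathfrak F$ with $f=^* p$. Since the map $k\mapsto m_k$ is injective and $[2^n,2^{n+1})\setminus\dom(\bar q)$ is nonempty only when $n=m_k$, the gaps of $p$ modulo finite are exactly the sets $H_k\setminus\dom(s_k)$, so (i) reduces to $\limsup_k|H_k\setminus\dom(s_k)|=\infty$. A naive choice of $s_k$ need not achieve this, so the argument refines the selection: proceeding by induction on $k$, one chooses $s_k$ to be an inclusion-minimal member of $\mathcal H_k$ that extends $q\restriction H_k$ and decides $\dot h(y_k)$, and exploits the growth of $\bar q$ along the subsequence of $k$ for which $|H_k|$ becomes large to guarantee $\limsup_k|H_k\setminus\dom(s_k)|=\infty$. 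Point (ii) then follows from the $\sigma$-directed closedness and genericity of the $\Poset$-generic filter $\mathfrak F$: each finite partial union $q\cup\bigcup_{k<n}s_k$ lies in $\Poset(\mathfrak F)$, and the resulting mod-finite-increasing sequence of $\mathfrak F$-witnesses has a common lower bound $f\in\mathfrak F$ with $f=^* p$.
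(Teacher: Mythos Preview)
Your approach has a genuine gap at the step where you claim $p=q\cup\bigcup_k s_k$ belongs to $\Poset(\mathfrak F)$. The argument you give for (ii) does not work: the finite partial unions $q\cup\bigcup_{k<n}s_k$ are all finite modifications of $q$, so their $\mathfrak F$-witnesses may all be taken to be the \emph{same} $f_q\in\mathfrak F$ with $f_q=^*q$. No amount of $\sigma$-directedness of $\mathfrak F$ will produce from this an $f\in\mathfrak F$ with $f=^* p$, because $p$ differs from $q$ on infinitely many points. Membership in $\Poset(\mathfrak F)$ is a genuine constraint---an arbitrary element of $\Poset$ extending $q$ need not be almost equal to anything in the fixed filter $\mathfrak F$---and your gluing construction gives no control over this. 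The argument for (i) is similarly hand-wavy: choosing $s_k$ ``inclusion-minimal'' gives no bound on $|H_k\setminus\dom(s_k)|$, and indeed Lemma~\ref{balpha} later in the paper shows that deciding values of $\dot h$ can require filling in $H_k$ down to a bounded complement.

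More fundamentally, your argument never invokes the specific properties of $\dot h$: that it mod finite extends each $h_r$ for $r\in\mathfrak F$, and that $F$ is a lifting of an automorphism. These are essential. The paper's proof proceeds quite differently: it first passes to $q'<q$ in $\mathfrak F$ that decides the ground-model set $F(Y')$ (using $V$-genericity of $\mathfrak F$), reads off candidate values $j_k\in F(Y')\cap[n_k,n_{k+1})$, and then argues by contradiction that $q'$ must already force $\dot h(y_k)=j_k$ for all but finitely many $k$. The contradiction comes from comparing $h_p$ (which witnesses triviality of $F$ on $\dom(h_p)$) against $\dot h$ and $F$ on a suitable infinite subset of $Y'$. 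Thus the result is that a condition \emph{already in} $\mathfrak F$ decides almost all of $\dot h\restriction Y$, after which only finitely many further extensions are needed---so the problem of manufacturing a new element of $\Poset(\mathfrak F)$ never arises.
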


\begin{proof} Let $K$ be the set of $k$ such that $q$ does not 
already force a value on  $\dot h(y_k)$ and let $Y' = \{ y_k : k\in
K\}$. Now, using the $V$-genericity of $\mathcal F$, 
 choose $q'<q$ in $\mathfrak F$ so that
 $q'$ forces a value on $F(Y')$. For each $k\in K$, 
choose  $j_k \in F(Y')\cap [n_k, n_{k+1})$ if it is  non-empty,
 otherwise set $j_k = 0$. Assume that the set $K'$, those $k\in K$
such that $q'$ does not force $\dot h(y_k) = j_k$, is infinite. 
It then follows from Lemma \ref{preserve2} that
there is a condition $p<q'$ for which there are
 infinitely many $k\in K'$ such that $y_k\in \dom(h_p)$ and 
$p\Vdash  \dot h(y_k) \neq j_k$. But now we have that $p$ forces that  
 $F(Y' \cap \dom(h_p)) =^* h_p(Y'\cap \dom(h_p)) $ is not almost
equal to $\dot h(Y'\cap \dom(h_p))$, since for each $k\in K'$ with
$y_k\in \dom(h_p)$, 
$j_k \in F(Y')\cap F(\dom(h_p))\setminus \dot h(Y'\cap \dom(h_p))$. 
\end{proof}

\def\Orb{\mathop{\rm Orb}}

\begin{definition} For each condition $q\in \mathbb P(\mathfrak F)$,
  and each $i\in \mathbb N$,
 let $\Orb_q(i) = \{ j  : (\exists p<q)~
p\forces{\mathbb P(\mathfrak F)}{\dot h(i) = j}\}$. Also let $S(k,q) =
\{ s\in \mathcal H_k : q\restriction H_k \subset s
\}$.
\end{definition}

\begin{corollary}
 Our condition $\bar q $ also\label{bounding}
 satisfies
  that for each $i\in \mathbb N $ 
and $q<\bar q $, if 
$\Orb_q(i)$ has more than one element, there is a $k$ 
such that 
 $\{i\}\cup \Orb_{q}(i)\subset   [n_k,n_{k+1})$.
\end{corollary}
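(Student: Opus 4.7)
The plan is to deduce this corollary directly from Lemma~\ref{preserve2}, with essentially no additional work: the corollary is really a reformulation of that lemma in the language of the $\Orb_q$ operator. The key observation is that whenever $|\Orb_q(i)| > 1$, the condition $\bar q$ itself cannot already decide $\dot h(i)$, which is precisely the hypothesis of Lemma~\ref{preserve2}.

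First I would verify that the assumption $|\Orb_q(i)| > 1$ forces $\bar q$ to leave $\dot h(i)$ undecided. Indeed, if $\bar q \Vdash \dot h(i) = m$ for some $m$, then for every $q \leq \bar q$ and every $p \leq q$ we would have $p \Vdash \dot h(i) = m$, so $\Orb_q(i) \subseteq \{m\}$, contradicting the hypothesis. Hence $\bar q$ does not force any value on $\dot h(i)$, and we are in the situation covered by Lemma~\ref{preserve2}.

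Now let $k$ be the unique index with $i \in [n_k,n_{k+1})$. For each $j \in \Orb_q(i)$, by definition of $\Orb_q$ there is some $p < q$ with $p \Vdash \dot h(i) = j$; since $p < q < \bar q$ and $\bar q$ does not decide $\dot h(i)$, Lemma~\ref{preserve2} yields $j \in [n_k,n_{k+1})$. Therefore $\Orb_q(i) \subseteq [n_k,n_{k+1})$, and combined with $i \in [n_k,n_{k+1})$ this gives $\{i\} \cup \Orb_q(i) \subseteq [n_k,n_{k+1})$, as required. There is no substantive obstacle here; the only point requiring any care is the small case distinction above about whether $\bar q$ decides $\dot h(i)$, and once that is handled the conclusion is immediate from Lemma~\ref{preserve2}.
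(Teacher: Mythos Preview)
Your proof is correct and matches the paper's intended argument: the paper states this as a corollary with no proof, indicating it follows immediately from Lemma~\ref{preserve2}, and your derivation is exactly the natural unpacking of that lemma in terms of $\Orb_q$.
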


\begin{lemma} 
For each $\alpha\in \omega_1$,
 there are $r_\alpha<\bar q \in \mathbb P(\mathfrak
F)$ and $W_\alpha\subset a_\alpha$ such that $r_\alpha\forces{\mathbb P(\mathfrak
  F)}{W_\alpha\in \mathcal W_\alpha \mbox{\ and\ }
\dot h[W_\alpha] \subset b_\alpha}$
\end{lemma}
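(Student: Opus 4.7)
The plan is to construct $(r_\alpha, W_\alpha)$ by exploiting the key local agreement: whenever $q \in \mathfrak F$ with $q \leq \bar q$, the witness $h_q$ lifts $F$ on $a_q$, so $h_q[a_\alpha \cap a_q] =^* F(a_\alpha) \cap h_q[a_q] \subseteq^* b_\alpha$ and hence $q$, as a condition in $\Poset(\mathfrak F)$, forces $\dot h[a_\alpha \cap a_q] \subseteq^* b_\alpha$ (using that $\dot h$ mod-finite extends $h_q$). The difficulty is that $a_\alpha \cap a_q \in \triv(F)$, so it is \emph{never} in $\mathcal W_\alpha$; the job is to patch these local agreements into a $\mathcal W_\alpha$-large set.

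First I would choose a selector $Y = \{y_k : k \in K_\alpha\} \subseteq a_\alpha$ with $y_k \in a_\alpha \cap [n_k, n_{k+1})$ whenever the intersection is non-empty (extending arbitrarily to remaining blocks to apply Lemma \ref{grabY}), and obtain $r_\alpha \leq \bar q$ deciding $\dot h \restriction Y$, say $r_\alpha \Vdash \dot h(y_k) = j_k$. Set $W_\alpha := \{y_k \in Y : j_k \in b_\alpha\}$. The goal is then to show $W_\alpha \in \mathcal W_\alpha$, in which case $r_\alpha$ and $W_\alpha$ are as required.

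I would prove $W_\alpha \in \mathcal W_\alpha$ by contradiction: if not, then by ultrafilterness $Y' := Y \setminus W_\alpha = \{y_k : j_k \notin b_\alpha\}$ lies in $\mathcal W_\alpha$. Using the $V$-genericity of $\mathcal F$ in $\Poset$, I would find $q' \in \mathfrak F$ with $q' \leq r_\alpha$ such that a cofinite tail of $Y'$ is contained in $a_{q'}$ (using the freedom to extend a condition's domain, bounded only by the growth condition \eqref{growth}). Then $q'$ forces $\dot h \restriction a_{q'} =^* h_{q'}$, so $j_k = h_{q'}(y_k)$ for cofinitely many $y_k \in Y' \cap a_{q'}$. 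By the local agreement this gives $j_k \in b_\alpha$ for cofinitely many $y_k \in Y'$, contradicting the definition of $Y'$.

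The main obstacle will be the last extension step: ensuring one can genuinely extend $r_\alpha$ to a $q' \in \mathfrak F$ whose $a_{q'}$ captures a prescribed tail of $Y'$. This requires checking that the relevant dense set sits in $V$ (so that $\mathcal F$ genericity applies), and carefully using Corollary \ref{bounding} and Lemma \ref{preserve2} to ensure the block structure is compatible. Once this is arranged, the contradiction forces $W_\alpha \in \mathcal W_\alpha$ and the lemma follows.
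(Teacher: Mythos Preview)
Your argument has a genuine gap at the very first step. You pick a selector $Y=\{y_k\}$ with at most one point in each interval $[n_k,n_{k+1})$, and then your contradiction hinges on the dichotomy ``either $W_\alpha\in\mathcal W_\alpha$ or $Y\setminus W_\alpha\in\mathcal W_\alpha$''. But that dichotomy requires $Y\in\mathcal W_\alpha$, which you have not established and in general cannot. The ultrafilter $\mathcal W_\alpha$ is an arbitrary point of $a_\alpha^*$ at which $F$ is non-trivial; a one-point-per-block selector of $a_\alpha$ is extremely thin (there are many pairwise disjoint such selectors once the blocks $a_\alpha\cap[n_k,n_{k+1})$ are large), and $\mathcal W_\alpha$ has no reason to contain the particular one you chose. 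So from $W_\alpha\notin\mathcal W_\alpha$ you only get $\mathbb N\setminus W_\alpha\in\mathcal W_\alpha$, which tells you nothing about $Y\setminus W_\alpha$. Lemma~\ref{grabY} is exactly what forces you into selectors, so this is not a matter of choosing $Y$ more cleverly.

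The paper avoids this by not trying to build $W_\alpha$ directly. It assumes the conclusion fails---so for every $r<\bar q$ and every $W\in\mathcal W_\alpha$ with $W\subset a_\alpha$ there exist $y\in W$ and an extension forcing $\dot h(y)\notin b_\alpha$---and runs a fusion: at stage $k$ it extracts such a bad point $y_k\in a_\alpha\cap[n_{\ell_k},n_{\ell_k+1})$ together with a local piece $s_k\in\mathcal H_{\ell_k}$ witnessing it. Passing to an infinite $L$ and a common extension $r$ below every $s_k\sqcup r^\alpha_k$ with $\{y_k:k\in L\}\subset\dom(h_r)$ then yields the contradiction $h_r[\{y_k\}]\subset^* b_\alpha$ versus $r\Vdash\dot h[\{y_k\}]\cap b_\alpha=\emptyset$. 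The key point is that this argument never needs the set $\{y_k\}$ to lie in $\mathcal W_\alpha$; the ultrafilter is used only to keep supplying, at each fusion step, a $\mathcal W_\alpha$-large set on which the hypothesis can be reapplied.
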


\begin{proof} Otherwise we can choose a fusion sequence $\{ r^\alpha_k : k\in
 \omega\}$, an increasing sequence of integers $\ell_k$ and values
 $y_k\in a_\alpha\cap [n_{\ell_k},n_{\ell_k+1})$,  and conditions 
$s_k\in \mathcal H_k$ such that $s_k\sqcup r^\alpha_k \forces
{\mathbb P(\mathfrak F)}{\dot h(y_k)\notin b_\alpha}$. There is an
an infinite set $L\subset\omega$  and an $r$
 below  $s_k\sqcup r^\alpha_k$ for all $k\in L$ and such that
$Y=\{ y_k : k\in L\}\subset \dom(h_r)$. 
Since $r$ forces that
$\dot h$  extends
 $h_r$, we have our contradiction since $h_r[Y]\subset^* \dot
 F(a_\alpha) = b_\alpha$ while $r\forces{}{\dot h[Y]\cap b_\alpha =
 \emptyset }$.
\end{proof}

By strengthening the condition $\bar q$, we can assume
that we have the following property.

\begin{lemma} 
For\label{aalpha}
 each integer $\ell$ and each condition $q<\bar q$, there is a
  condition $p<q$ and a set $I\in [\omega_1]^\ell$ such that $p<r_\alpha$
  for each $\alpha\in I$.
\end{lemma}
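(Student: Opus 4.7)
The plan is to establish this density assertion by combining the ccc of $\mathbb P(\mathfrak F)$ from Section~3 with a uniformization of the family $\{r_\alpha:\alpha\in\omega_1\}$. The first key observation is that the fusion argument proving the previous lemma actually goes through starting from any condition $q<\bar q$, not merely from $\bar q$; this produces, for each $q<\bar q$ and each $\alpha\in\omega_1$, some $r<q$ and $W\subset a_\alpha$ with $r\forces{\mathbb P(\mathfrak F)}{W\in\mathcal W_\alpha \mbox{ and } \dot h[W]\subset b_\alpha}$. Thus the set of such witnesses for $\alpha$ is dense below $\bar q$ for every $\alpha\in\omega_1$.

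My next step would be to uniformize the choice of $\{r_\alpha\}$ so that finitely many can simultaneously be extended below any $q<\bar q$. Working in $V[H]$, where $\mathbb P(\mathfrak F)$ is ccc, I would pass to an uncountable $\Lambda\subset\omega_1$ and, using the density above, refine each $r_\alpha$ for $\alpha\in\Lambda$ so that the resulting family becomes centered: every finite subfamily admits a common lower bound in $\mathbb P(\mathfrak F)$. The intended tool is a pressing-down / $\Delta$-system argument applied to the residues $r_\alpha\setminus\bar q$, combined with the preservation properties of $\mathbb P(\mathfrak F)$ (ccc, $\omega^\omega$-bounding, preservation of non-meager reals from $V$). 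I expect this step to be the principal obstacle, since centeredness is in general strictly stronger than pairwise compatibility even in ccc posets, so some additional structure of $\mathbb P(\mathfrak F)$ beyond bare ccc must be exploited.

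With such a centered subfamily in place, the proof would conclude by choosing $\bar q^*<\bar q$ below a fixed $r_{\alpha_0}\in\Lambda$ and verifying the statement. Given $q<\bar q^*$ and $\ell$, the ccc of $\mathbb P(\mathfrak F)$ ensures that uncountably many $\alpha\in\Lambda$ have $r_\alpha$ compatible with $q$ (otherwise one constructs an uncountable antichain by iterating the incompatibilities). Any $\ell$ such indices $\alpha_1<\cdots<\alpha_\ell$ together with $q$ then admit a common extension $p$ by centeredness, and this $p<q$ with $p<r_{\alpha_i}$ is the required witness. A single countable sequence of further strengthenings, exploiting $\sigma$-directed closure of $\Poset$ to pick $\bar q$ dominating the necessary conditions for every $\ell$ at once, completes the construction.
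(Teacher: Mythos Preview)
Your proposal contains a genuine gap at the step you yourself flag as the ``principal obstacle'': you never actually produce the centered subfamily, and nothing in the listed preservation properties (ccc, $\omega^\omega$-bounding, non-meager preservation) yields one. A $\Delta$-system argument on the residues $r_\alpha\setminus\bar q$ gives at best pairwise compatibility, which, as you note, does not upgrade to centeredness in a general ccc poset. Even granting centeredness of $\{r_\alpha:\alpha\in\Lambda\}$, your final step has a further problem: knowing that uncountably many $r_\alpha$ are individually compatible with $q$, together with centeredness of the $r_\alpha$'s \emph{among themselves}, still does not produce a common lower bound of $q$ and $\ell$ of the $r_\alpha$'s simultaneously.

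The paper's argument bypasses all of this with a one-line use of ccc, after the sentence preceding the lemma permitting $\bar q$ to be strengthened. Let $\dot S$ be the name $\{\alpha<\omega_1:r_\alpha\in\dot G\}$. Each $r_\alpha<\bar q$ forces $\alpha\in\dot S$, so by ccc $\bar q$ cannot force $\dot S$ to be countable (a ccc name for a countable set of ordinals is covered by a ground-model countable set, yet every $\alpha<\omega_1$ is put into $\dot S$ by some condition below $\bar q$). Hence some condition below $\bar q$, which becomes the new $\bar q$, forces $\dot S$ to be uncountable. Now for any $q<\bar q$ and any $\ell$, take a generic $G\ni q$ and any $\ell$ distinct $\alpha_1,\ldots,\alpha_\ell$ in the valuation of $\dot S$. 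Then $q,r_{\alpha_1},\ldots,r_{\alpha_\ell}$ all lie in the directed filter $G$, so some $p\in G$ is below all of them. No combinatorial uniformization of the family $\{r_\alpha\}$ is required: the generic filter itself supplies the common extension.
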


\begin{proof} Since $\mathbb P(\mathfrak F)$ is ccc,
 there is a condition $q\leq \bar q$ forcing that there
 is an uncountable set
 of $\alpha\in\omega_1$ such that $r_\alpha$ is in the
 generic filter.
\end{proof}

\begin{lemma} For each $\alpha\in \omega_1$, there\label{balpha}
 is an integer
  $\ell_\alpha$ such that for each $k$ and each
$s_k\in S(k,r_\alpha)$, if $|H_k\setminus \dom(s_k)| > \ell_\alpha$,
then $s_k\sqcup r_\alpha$ does not decide $\dot h\restriction W_\alpha
 \cap [n_k,n_{k+1})$.
\end{lemma}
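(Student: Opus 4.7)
The plan is to argue by contradiction, mimicking the fusion-style argument that produced the pair $(r_\alpha, W_\alpha)$ in the lemma immediately preceding Lemma~\ref{aalpha}. Suppose, toward a contradiction, that no integer $\ell_\alpha$ with the required property exists. Then for each $j \in \omega$ we can find an integer $k_j$ and a function $s_{k_j} \in S(k_j, r_\alpha)$ with $|H_{k_j} \setminus \dom(s_{k_j})| > j$ such that $s_{k_j} \sqcup r_\alpha$ decides $\dot h \restriction (W_\alpha \cap [n_{k_j}, n_{k_j+1}))$; by thinning we may assume $k_0 < k_1 < \cdots$, and we write $\tau_j$ for the forced restriction.

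Since the $s_{k_j}$ have domains contained in the pairwise disjoint blocks $H_{k_j} \subseteq [2^{m_{k_j}}, 2^{m_{k_j}+1})$ and each leaves more than $j$ elements of $H_{k_j}$ outside its domain, the growth condition (\ref{growth}) is preserved when one tries to combine these local witnesses. Using the $\sigma$-directed-closedness of $\Poset$ together with the $V$-genericity of $\mathfrak F$, I would extract an infinite set $L \subseteq \omega$ and a single condition $r \in \mathfrak F$ that lies below $s_{k_j} \sqcup r_\alpha$ for every $j \in L$ and additionally satisfies $[n_{k_j}, n_{k_j+1}) \subseteq \dom(r)$ for each $j \in L$. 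Then $h_r$ is defined on each $W_\alpha \cap [n_{k_j}, n_{k_j+1})$ for $j \in L$, and by Lemma~\ref{mainlemma} the condition $r$ forces $\dot h$ to extend $h_r$ mod finite, so $h_r$ must mod finite agree with $\tau_j$ on $W_\alpha \cap [n_{k_j}, n_{k_j+1})$.

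The contradiction should come from the fact that, because $|H_{k_j} \setminus \dom(s_{k_j})| > j$, there is genuine room inside $H_{k_j}$ to carry out the fusion in at least two distinct ways. Specifically, I plan to produce two conditions $r, r' \in \mathfrak F$ both extending the whole family $\{s_{k_j} \sqcup r_\alpha : j \in L\}$, but with $h_r$ and $h_{r'}$ differing on some point of $W_\alpha \cap [n_{k_j}, n_{k_j+1})$ for infinitely many $j$. Since each of $r, r'$ forces $\dot h$ to mod finite extend its own $h_r$, $h_{r'}$, this conflicts with the fact that $s_{k_j} \sqcup r_\alpha$ had pinned every value of $\dot h$ on $W_\alpha \cap [n_{k_j}, n_{k_j+1})$ to $\tau_j$.

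The hardest part will be the middle step: arranging the two condition $r, r' \in \mathfrak F$ with the right alignment. One has to ensure, on the one hand, that each combined condition still lies in $\mathfrak F$ (not merely in $\mathbb P(\mathfrak F)$), so that the triviality witnesses $h_r, h_{r'}$ are actually available, and on the other hand, that the only freedom being exercised is in the unfilled portion of the $H_{k_j}$'s, so that the two completions produce values of $h_r, h_{r'}$ that genuinely differ on points of $W_\alpha \cap [n_{k_j}, n_{k_j+1})$. This is precisely the delicate interplay between $\mathfrak F$ and the name $\dot h$ that underlies Lemmas~\ref{also-useful-lemma} and \ref{preserve2}, and the plan is to follow the same template.
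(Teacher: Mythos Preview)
Your plan has a genuine gap at the step you yourself flag as ``the hardest part.'' You want to produce $r,r'\in\mathfrak F$, both below every $s_{k_j}\sqcup r_\alpha$, with $h_r$ and $h_{r'}$ disagreeing on $W_\alpha\cap[n_{k_j},n_{k_j+1})$ for infinitely many $j$. This is impossible for two independent reasons. First, by hypothesis $s_{k_j}\sqcup r_\alpha$ already \emph{decides} $\dot h\restriction(W_\alpha\cap[n_{k_j},n_{k_j+1}))$ to be $\tau_j$; any $r$ below it forces the same values, and since $r$ also forces $\dot h$ to mod-finite extend $h_r$, we get $h_r\restriction(W_\alpha\cap[n_{k_j},n_{k_j+1}))=\tau_j$ for all but finitely many $j$, and likewise for $h_{r'}$. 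The freedom in $H_{k_j}\setminus\dom(s_{k_j})$ can only affect values of $\dot h$ at points \emph{not} already decided, hence not at points of $W_\alpha\cap[n_{k_j},n_{k_j+1})$. Second, $\mathfrak F$ is a filter and $\{h_q:q\in\mathfrak F\}$ is mod-finite directed, so any two $h_r,h_{r'}$ with $r,r'\in\mathfrak F$ are automatically mod-finite compatible on their common domain.

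The idea you are missing is the non-triviality of $F$ at $\mathcal W_\alpha$, which you never invoke. The paper's argument runs as follows: the combined condition $q=\bigcup_{k\in K}s_k\sqcup r_\alpha$ forces $\dot h\restriction W_\alpha$ to equal a single ground-model function $h_\alpha$. Since $F$ is not trivial at $\mathcal W_\alpha$, the function $h_\alpha$ cannot induce $F$ on all subsets of $W_\alpha$, so there exist $q'<q$ and an infinite $W\subset W_\alpha$ with $q'\Vdash \dot F(W)\cap h_\alpha[W]=\emptyset$. Extending $q'$ and shrinking $W$ so that $W\subset\dom(h_{q'})$ yields the contradiction: on the one hand $h_{q'}[W]=^*F(W)$ because $h_{q'}$ induces $F$ on $a_{q'}$; on the other hand $q'<q$ forces $\dot h[W]=h_\alpha[W]$ and also $\dot h=^*h_{q'}$, so $h_{q'}[W]=^*h_\alpha[W]$; but these two sets were arranged to be disjoint.
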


\begin{proof} If such an integer $\ell_\alpha$ did not exist, then we
  could find an infinite $K\subset \omega$ and a sequence
$\langle s_k : k\in \omega\rangle \in \Pi_{k\in \omega} S(k,r)$
with 
$\{ |H_k\setminus \dom(s_k)| : k\in K\}$ diverging to
  infinity, and
 such that $s_k\sqcup r$ decides $\dot h\restriction W_\alpha \cap [n_k,n_{k+1})$
   for each $k$. But then of course, 
$q = \bigcup_{k\in K} s_k \sqcup r_\alpha$ would
  force that $\dot h\restriction W_\alpha = h_\alpha$ for some
  $h_\alpha \in V$. It follows easily 
  from the assumption that $F$ is not trivial at $\mathcal W_\alpha$,
  that there is some $q'<q$ and
  some infinite $W\subset
  W_\alpha$ such that $q'\forces{\mathbb P}{\dot F(W)\cap h_\alpha[W] 
=\emptyset}$.
  By further extending  $q'$ and possibly shrinking $W$,
  we can assume that $W\subset \dom(h_{q'})$. 
This contradicts that $h_{q'}[W]$ is supposed to be forced by $q'$ to be
(mod finite) equal to  both $\dot   h[W]$ and $\dot F(W)$.
\end{proof}

By passing to an uncountable subcollection we may suppose that there
is some $\bar\ell$ such that $\ell_\alpha = \bar\ell$ for all $\alpha$. 
Now define $S'(k,q) = \{ s\in S(k,q) : |H_k\setminus \dom(s) | >
\bar\ell\}$.

\begin{lemma} There\label{orbits}
 is a condition $r$ and an infinite set $K$ such that
$\{ | H_k\setminus \dom(r)| : k\in K\}$ diverges to infinity,
and, for each $k\in K$, we can select $\{ i_s : s\in S'(k,r) \}\subset
[n_k,n_{k+1})$ 
 such that $\Orb_r(i_s ) \cap  \Orb_r(i_{s'})$ is empty
 for each $s\neq s'\in S'(k,r)$,
and $s\sqcup r$ does not decide $\dot h(i_s)$.
\end{lemma}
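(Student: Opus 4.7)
The plan is a fusion over stages $n$, building $r=\bigcup_n r_n$ and $K=\{k_n : n\in\omega\}$, at each stage permanently committing $r\restriction[n_{k_n},n_{k_n+1})$. By Lemma \ref{preserve2}, for $i\in[n_{k_n},n_{k_n+1})$ both $\Orb_r(i)$ and the decidedness of $\dot h(i)$ under any extension depend only on that frozen restriction, so nothing we arrange at stage $n$ is disturbed by later extensions of $r$ on other intervals.

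At stage $n$: pick $k_n>k_{n-1}$ with $n_{k_n}>\max\dom(r_{n-1})$ and $|H_{k_n}|$ very large. By finite pigeonhole on the uncountable pool of active $\alpha$'s, pass to an uncountable subfamily $A_n$ on which $r_\alpha\restriction[n_{k_n},n_{k_n+1})$ equals a fixed function $\tau_n$. Commit $r_n\restriction H_{k_n}$ to some extension of $\tau_n\restriction H_{k_n}$ that leaves roughly $\bar\ell+n+1$ coordinates of $H_{k_n}$ undefined; this simultaneously controls $M_n:=|S'(k_n,r_n)|$ (polynomially in $n$) and forces $|H_{k_n}\setminus\dom(r_n)|\to\infty$ along $K$. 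By a ccc application of Lemma \ref{aalpha} inside $A_n$, extend $r_n$ outside $[n_{k_n},n_{k_n+1})$ so that $r_n<r_{\alpha_s}$ for $M_n$ distinct $\alpha_s\in A_n$, one for each $s\in S'(k_n,r_n)$. For each $s$, Lemma \ref{balpha} gives that $s\sqcup r_{\alpha_s}$ does not decide $\dot h$ on $W_{\alpha_s}\cap[n_{k_n},n_{k_n+1})$; since the pigeonholing step arranges $s\sqcup r_n$ and $s\sqcup r_{\alpha_s}$ to have the same restriction to $[n_{k_n},n_{k_n+1})$, locality (Lemma \ref{preserve2}) transfers the non-decision to $s\sqcup r_n$, and we pick $i_s\in W_{\alpha_s}\cap[n_{k_n},n_{k_n+1})$ on which $s\sqcup r_n$ is undecided. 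Because $r_n<r_{\alpha_s}$ and $r_{\alpha_s}\Vdash\dot h[W_{\alpha_s}]\subset b_{\alpha_s}$, Corollary \ref{bounding} then yields $\Orb_{r_n}(i_s)\subset b_{\alpha_s}\cap[n_{k_n},n_{k_n+1})$.

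The principal obstacle is upgrading the almost disjointness of the $b_\alpha$'s into strict disjointness of the orbits. I would handle this by first applying the $\Delta$-system lemma to the finite traces $\{b_\alpha\cap[n_{k_n},n_{k_n+1}):\alpha\in A_n\}$, refining $A_n$ to an uncountable subfamily with common root $R$; on this subfamily the traces outside $R$ are pairwise disjoint, so if $R=\emptyset$ we are already done. When $R\neq\emptyset$, I would eliminate each $j\in R$ from the orbits $\Orb_{r_n}(i_s)$ by committing finitely many extra coordinates of $r_n\restriction H_{k_n}$: every $j\in\Orb_{r_n}(i_s)$ is witnessed by some $s''\in\mathcal H_{k_n}$ with $s''\sqcup r_n\Vdash\dot h(i_s)=j$, and such a witness can be expelled by fixing a single coordinate of $r_n\restriction H_{k_n}$ where $s''$ disagrees with the chosen extension, taking care not to destroy the undecidedness witness at $i_s$ itself. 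Since only finitely many triples $(j,s,s')$ must be treated, the extra load on $\dom(r_n)\cap H_{k_n}$ is bounded and is absorbed by the slack built into our choice of window; the fusion $r=\bigcup_n r_n$ with $K=\{k_n\}$ then meets every clause of the lemma.
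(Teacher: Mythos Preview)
Your overall scaffolding---fusion, using Lemma~\ref{aalpha} to drop below finitely many $r_\alpha$, and invoking Lemma~\ref{balpha} for the undecidedness---is exactly the paper's. The gap is in your treatment of the $\Delta$-system root $R$.

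To remove a value $j$ from $\Orb_{r_n}(i_s)$ you must make $r_n$ incompatible with \emph{every} $s''\in\mathcal H_{k_n}$ for which $s''\sqcup r_n\Vdash\dot h(i_s)=j$, not just one witness. There may be many such $s''$, and nothing in your argument bounds their number; becoming incompatible with all of them could force you to commit so many coordinates of $H_{k_n}$ that you either destroy the undecidedness of $\dot h(i_s)$ under $s\sqcup r_n$ or shrink $S'(k_n,r_n)$ below the size you need. The claim that the ``extra load is bounded and absorbed by the slack'' is therefore unsupported, and this step does not go through.

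The paper avoids the issue by reversing the order of your choices. Instead of fixing $k_n$ first and then selecting the $\alpha$'s, it first applies Lemma~\ref{aalpha} to obtain a \emph{finite} $I\subset\omega_1$ and a condition $r$ with $r<r_\alpha$ for all $\alpha\in I$; since $\{a_\alpha\cup b_\alpha:\alpha\in I\}$ is a finite almost disjoint family, there is an $m$ with $(a_\alpha\cup b_\alpha)\cap(a_\beta\cup b_\beta)\subset m$ for all $\alpha\neq\beta\in I$. Only then does one work at some $k$ with $n_k>m$, where the traces $b_\alpha\cap[n_k,n_{k+1})$ are genuinely pairwise disjoint and $\Orb_r(i_s)\subset b_{\alpha_s}\cap[n_k,n_{k+1})$ already gives disjoint orbits with no root to eliminate. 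With this order your pigeonhole step on $r_\alpha\restriction[n_{k_n},n_{k_n+1})$ becomes unnecessary as well: once $r<r_\alpha$ for every $\alpha\in I$, $r$ automatically extends each $r_\alpha$ on every interval.
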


\begin{proof} Fix any integer $\ell$ and let $L$ be  bigger than
  $(\ell+2)^{\ell+2}$. 
Apply Lemma \ref{aalpha} to find an $r$ which is below
$r_\alpha$ for each $\alpha \in I$ for some $I\subset
  \omega_1$ of cardinality at least $L$. 
    For each $\alpha\in I$, 
we can assume that  $r$ decides 
 $\dot F(W_\alpha) = F(W_\alpha)$ and, by
 Lemma \ref{preserve2}, that 
$\dom(h_r)$ contains $[n_k,n_{k+1})$ for each $k$
  such that $r$ decides $\dot h\restriction W_\alpha\cap 
[n_k,n_{k+1})$ . 
Recall that $a_\alpha$ and $b_\alpha$ were defined in  Lemma \ref{notcccoverfin}.
 We may choose 
 an $m$ such that $[a_\alpha\cup F(W_\alpha)\cup b_\alpha]\cap 
[a_\beta \cup  F(W_\beta)\cup b_\beta]
  \subset m$ for each $\alpha\neq \beta\in I$.

Let $K = \{ k : |H_k \setminus \dom(r)| >\ell\}$. 
It follows from Lemma \ref{balpha}, that for each 
 $\alpha\in I$,  $k\in K$, and $s\in S'(k,r)$,
there is an  $i\in W_\alpha\cap [n_k,n_{k+1})$ for which
$s\sqcup r$ does not decide $\dot h(i)$. 
Therefore, we can select any $k\in K$ and $s_k\in S(k,r)$ with
$\ell\leq |H_k
\setminus \dom(s_k)| < \ell+2$ and fix any injection
 from $ S'(k,s_k\cup r)$ into $I$ (i.e. $\{ \alpha_s : s\in 
S'(k,s_k\cup r)\}$). For each $s\in S'(k,s_k\sqcup r)$, there is an
$i_s\in W_\alpha\cap [n_k,n_{k+1})$ such that $s \sqcup r$ does not
  decide $\dot h(i_s)$. Since $r$ forces that 
$\{i_s,\dot h(i_s)\}\subset
 a_{\alpha_s}\cup b_{\alpha_s}$ and for $s'\neq s$, $r$ forces that 
$i_{s'},\dot
  h(i_{s'})\notin a_{\alpha_s}\cup 
b_{\alpha_s}$,  we have satisfied the requirement
  that $i_s \notin \Orb_r(i_{s'})$ (the hard part was making them
  distinct). To complete the proof of the Lemma, simply perform
  another fusion to inductively choose $s_k$ and  extend $s_k \sqcup r$
  for larger and   larger $\ell$.  
\end{proof}

\begin{theorem} The trivial ideal, $\triv(F)$, is ccc\label{repeatcccfin} over fin.
\end{theorem}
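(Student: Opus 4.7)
The plan is to complete the contradiction that was begun by assuming $\triv(F)$ is not ccc over fin. By the time we reach this final statement we have at hand, via Lemmas \ref{notcccoverfin}--\ref{orbits}, a rigid combinatorial skeleton: the increasing partition $\langle n_k : k\in\omega\rangle$, the condition $\bar q\in\mathfrak F$ governing the local behaviour of $\dot h$ (Lemma \ref{preserve2}), the $\Poset(\mathfrak F)$-name $\dot h$ mod-finite extending every $h_q$ for $q\in\mathfrak F$, the witnessing conditions $r_\alpha<\bar q$ together with $W_\alpha\subseteq a_\alpha$ satisfying $r_\alpha\Vdash \dot h[W_\alpha]\subseteq b_\alpha$, the uniform bound $\bar\ell$ from Lemma \ref{balpha}, and finally, from Lemma \ref{orbits}, a condition $r$, an infinite set $K\subseteq\omega$, and on each cell $[n_k,n_{k+1})$ with $k\in K$ a system $\{i_s : s\in S'(k,r)\}$ of witnesses with pairwise disjoint orbits $\Orb_r(i_s)\subseteq [n_k,n_{k+1})$ for which $s\sqcup r$ does not decide $\dot h(i_s)$.

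The first step is to read off the combinatorial content of Lemma \ref{orbits}: on each $[n_k,n_{k+1})$ with $k\in K$ the name $\dot h$ has genuine branching at each witness $i_s$, into pairwise disjoint subsets of $[n_k,n_{k+1})$, and the branching is controlled entirely by the choice of extension $s\in S'(k,r)$ of $r$ on $H_k$. Because $|H_k\setminus\dom(r)|$ diverges on $K$ (and hence so does $|S'(k,r)|$), this gives, below $r$, an effective tree of pairwise incompatible local commitments for $\dot h$.

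Next I would run a fusion below $r$, in the spirit of the proof of Lemma \ref{balpha}, using this branching tree to construct a single stronger condition $q<r$ which decides $\dot h$ on $W_{\alpha^*}\cap\bigcup_{k\in K}[n_k,n_{k+1})$ to equal some ground-model function $h_{\alpha^*}\in V$, for an $\alpha^*\in I$ selected so that $q<r_{\alpha^*}$ (available by Lemma \ref{aalpha}, applied cofinally along the fusion). Then, exactly as in the proof of Lemma \ref{balpha}, the fact that $F$ is not trivial at $\mathcal W_{\alpha^*}$ produces some $q'<q$ and an infinite $W\subseteq W_{\alpha^*}\cap\dom(h_{q'})$ with $q'\Vdash F(W)\cap h_{\alpha^*}[W]=\emptyset$; but $q'$ forces $h_{q'}[W]$ to be simultaneously mod-finite equal to $F(W)$ and to $\dot h[W]=h_{\alpha^*}[W]$, which is the desired contradiction. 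Should the fusion fail to close at any stage, the obstruction itself produces an uncountable antichain in $\Poset(\mathfrak F)$, violating its ccc.

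The main obstacle is the bookkeeping of the fusion. At each step one must commit to an $s_k\in S'(k,r)$ (enlarging the coded part of $\dot h$) while simultaneously preserving an uncountable tail of $I$ along which $r_\alpha$ remains compatible with the current condition, and while ensuring that the values being committed lie in the correct $b_\alpha$. This interleaving, threading between the growing local branching allowed by $|H_k\setminus\dom(s_k)|>\bar\ell$ and the global constraint $\dot h[W_\alpha]\subseteq b_\alpha$, is the delicate part: Lemmas \ref{aalpha} and \ref{balpha} together are what permit it to be carried out without losing either the uncountability of $I$ or the non-decision freedom supplied by Lemma \ref{orbits}.
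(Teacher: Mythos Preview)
Your proposal has a genuine gap: the fusion you describe cannot succeed, and the reason is precisely Lemma~\ref{balpha}. You want to build a condition $q<r$ (with, necessarily, $\{|H_k\setminus\dom(q)|:k\in K\}$ diverging so that $q$ is a condition at all) that decides $\dot h$ on $W_{\alpha^*}\cap\bigcup_{k\in K}[n_k,n_{k+1})$. But Lemma~\ref{balpha} says exactly that whenever $|H_k\setminus\dom(s_k)|>\bar\ell$, the condition $s_k\sqcup r_{\alpha^*}$ does \emph{not} decide $\dot h\restriction W_{\alpha^*}\cap[n_k,n_{k+1})$. Since your fused $q$ must have $|H_k\setminus\dom(q)|>\bar\ell$ for all but finitely many $k\in K$, it cannot decide $\dot h$ on the tail of $W_{\alpha^*}$, and you never obtain a ground-model $h_{\alpha^*}$ to play against. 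Your fallback (``the obstruction itself produces an uncountable antichain'') is not substantiated: the fusion simply fails to achieve the goal, it does not visibly split into $\omega_1$ incompatible pieces. More tellingly, the disjoint-orbit conclusion of Lemma~\ref{orbits} --- the only new leverage available at this point --- plays no role in your argument.

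The paper's proof goes in a completely different direction. Rather than trying to decide $\dot h$, it exploits the freedom \emph{not} to decide. For each $q\leq r$ one sets $X(q)=\bigcup_{k\in K}\{i_s:s\in S'(k,q)\}$; these sets generate a filter, and one picks any ultrafilter $x$ extending it. The contradiction is that $\dot\Phi(x)$ cannot be assigned a value: whether some $q<r$ forces $X(r)\in\dot\Phi(x)$ or forces $X(r)\notin\dot\Phi(x)$, one uses the fact that $i_s$ is the \emph{unique} element of $\Orb_r(i_s)$ lying in $X(r)$ (this is where the pairwise disjointness of the orbits is essential) to choose extensions $s'_j$ pushing $\dot h(i_j)$ off $X(r)$, or to the wrong side of a witnessing set $Z$. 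A final fusion produces a $q^*$ for which $h_{q^*}$ and the forced values of $\dot h$ disagree on an infinite set, contradicting that $\dot h$ mod-finite extends $h_{q^*}$. The missing idea in your proposal is precisely this construction of the ultrafilter $x$ and the use of the disjoint-orbit structure to defeat any putative value of $\dot\Phi(x)$.
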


\begin{proof} 
Let $r$ and the sequence $\{ \{i_s : s\in S'(k,r) \} : k\in K\}$ be as
constructed in Lemma \ref{orbits}. Since $\{ | H_k \setminus \dom(r) |
: k\in K\}$ diverges to infinity, we may assume that 
$\dom(r) \supset [n_k,n_{k+1})$ for each $k\notin K$.  We define a
  $\mathbb P$-name of an ultrafilter. Each $q<r$ forces that the
set $X(q)=\bigcup_{k\in K} \{ i_s : s\in S'(k,q) \} $ is a member. 
Let $x$ be any ultrafilter extending this filter. 
We claim that $\dot \Phi(x)$ has no value. Assume first that there is
some $q<r$ which forces that $X(r)\in \dot \Phi(x)$ (i.e. $q$ forces
that $ F^{-1}(X(r))\in x$). We may then further assume that some 
$q'<q$
forces that $X(q)$ has some infinite subset $Y(q)\in x$ such that 
for some $m$,
$(F(Y(q))\setminus m) \subset X(r)$. Of course, $Y(q)$ is just a
set in $V$ and $Y(q)\cap X(q')$ is large. Choose an infinite sequence
$\{ k_j : j\in \omega\}$ so that for each $j$, we can choose
$i_j  \in Y(q)\cap \{ i_s : s\in S'(k_j,q')\}$
and so that 
$\{ | H_{k} \setminus \dom(q')| : k\notin \{ k_j : j\in \omega\}$
 diverges to infinity. For each $j$, let $s_{k_j} $ denote the 
member $s$ of $S'(k_j,q')$ such that $i_j = i_s$. For each $j$, choose
any $s'_j\in S'(k_j,q')$ which extends $s_{k_j}$ that satisfies
that 
$s'_j\sqcup q' \forces{\mathbb P(\mathfrak F)}{\dot h(i_j) \neq  i_j}$. 
 Let $h^*$ denote the function with domain $\{ i_j : j \in
\omega\}$ satisfying that $
s'_j\sqcup q' \forces{\mathbb P(\mathfrak F)}{\dot h(i_j) =
  h^*(i_j)}$. Note that 
$h^*(i_j)\notin X(r)$ for all $j$ since $i_j$
is the only member of $\Orb_r(i_j)$ in $X(r)$. 
 We can extend the condition $q'\cup \bigcup_j
 s'_{j}$ further to some $q^*$ so that $\dom(q^*)\supset
 [n_{k_j},n_{k_j+1})$ for each $j$. 
We observe that $J=
\{ i_{j} : j\in \omega\}\subset Y(q)\cap \dom
(h_{q^*})$. By removing finitely many elements, we may assume that
$q^*$ forces that $ h^*$ agrees with $h_{q^*}$ on $J$. However, we now
have a contradiction since $h_{q^*}[J] =^* F(J) \subset^* 
F(Y(q))\subset^* X(r)$.

Now assume that there is a $q<r$ which forces that $X(r)$ is not in
$\dot \Phi(x)$. There is a $q'<q$ and a $Y(q)\subset X(q)$ 
such
that for some $Z\subset \Naturals\setminus X(r)$,
$q'$ forces that $Y(q)\in x$ and $Z=  F(Y(q))$.
Again select a sequence $\{ k_j : j\in \omega\}$ so that 
for each $j$,  $[n_{k_j}, n_{k_j+1})\cap Y(q)\cap X(q')$ is not empty, 
and choose $i_j$ from this set.
We may choose this sequence so that
 $\{ | H_k \setminus \dom(q')| : k\notin \{ k_j : j\in \omega\}$
diverges to infinity. 
For each $j$, let $s_j\in S'(k_j,q')$ be chosen
so that $i_j=i_{s_j}$. If for infinitely many $j$,
it is possible to select $s'_j\in
S'(k_j,s_j\cup q')$ so that $s'_j\cup q'$ forces that $\dot h(i_j)$ is
not in $Z$, then we select such an $s'_j$. The proof then proceeds
much as in the first case because it will allow us to obtain that
for an infinite $J$,
$\dot h[\{ i_j : j\in J\}]$ is disjoint from $Z$ in contradiction to 
 $F(\{ i_j : j\in J\})$ being contained in $Z$.  In the other case, we
select $z_j\in Z$ such that $s_j$ has an extension forcing that $\dot
h(i_j)$ is equal to $z_j$, but we also know that we can (and do)
select $s'_j$ extending $s_j$ to force that $\dot h(i_j)$ is not equal
to $z_j$. Applying the same arguments to the automorphism $\dot
F^{-1}$ we may certainly select an infinite $J\subset \omega$ and a
$q^* < q'\cup \bigcup_j s'_j$ so that $\{ z_j : j\in J\}$ is in the
range of $h_{q^*}$. It follows that there is a sequence $Y=\{ y_j : j\in
J\}\subset Y(q)$ such that $h_{q^*}(y_j) = z_j$ for each $j\in J$. 
Clearly then this puts $z_j\in \Orb_r(y_j)$ for all but finitely many
$j$. By Lemma \ref{bounding}, we actually have that $y_j$ is also from 
$[n_{k_j},n_{k_j+1})$ and so the contradiction is that 
we have arranged that $\Orb_r(y_j)$ and $\Orb_r(i_j)$ are disjoint.
\end{proof}

\section{all homeomorphisms are trivial when forcing with $\Poset_0$ }

In this section we first restrict to the case where  $\Poset$ is  $\Poset_0$ 
and we  continue with
  the assumptions on $F$ 
 and $\dot h$ and
 the sequence $\{n_k : k\in \omega\}$ established in
 the previous section.
  We begin with a $\bar q $ and the sequence $\{ n_k, m_k : k\in \omega\}$
as developed in the paragraph following  Lemma \ref{preserve2}.
In particular, that $[2^{m_k},2^{m_k+1})$ contains
$[n_k,n_{k+1})\setminus \dom(\bar q )$.
For each $k$,  we will now let $H_k$ be the set $
[2^{m_k},2^{m_k+1})\setminus \dom(\bar q ) $.
We may assume that $\{ |H_k| : k\in \omega\}$ diverges to infinity.
Say that a condition $q$ is standard, if for each $\ell>0$, there are
at most finitely many $k$ such that $H_k\setminus \dom(q)$
has cardinality $\ell$. The standard conditions are dense below $\bar q $
in $\Poset_0$. For a standard condition $q$, let $K(q)$ denote those
 $k$ such that $H_k\setminus \dom(q)$ is not empty. It follows then
 that $\{ | H_k \setminus \dom(q) | : k\in K(q)\}$ diverges to infinity. 
Recall that $q$ is identically $0$ on $\dom(q)\cap S^k$ 
for all $k\in K(q)$.  For a condition $p$ and $i\in H_k\setminus \dom(p)$,
we abuse notation and suppose that $p\cup \{(i,1)\}$ denotes
the smallest condtion in $\Poset_0$ that contains
 $p\cup \{(i,1)\}$.

\begin{lemma} If   $p_0<\bar q$ in $\Poset_0$ 
is a\label{p0trivial}
 standard condition such
that no extension of $p_0$ decides $\dot h(t)$ for all values of $t$, 
then there is an extension $p< p_0$ such that for all $i\in \mathbb N\setminus
\dom(p)$, there is a value $t_i$ so that for some distinct pair $u_i,v_i$,
$p\cup \{(i,0)\}\Vdash \dot h(t_i) = u_i$ and $p\cup \{(i,1)\}\Vdash
\dot h(t_i) = v_i$. 
\end{lemma}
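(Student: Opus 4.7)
The plan is to argue by contrapositive. Assume no such $p$ exists; I will construct an extension $p^* \le p_0$ in $\Poset_0$ forcing $\dot h$ to equal a function in $V$, contradicting the hypothesis. Call $p$ \emph{bad at} $i$ if no triple $(t,u,v)$ with $u \ne v$ satisfies $p\cup\{(i,0)\}\Vdash \dot h(t)=u$ and $p\cup\{(i,1)\}\Vdash \dot h(t)=v$. A preliminary observation is that goodness at $i$ is preserved under extension, since a witnessing triple $(t,u,v)$ for $p$ remains a witness for every $p'\le p$; dually, if $p'\le p$ is bad at $i$ then so is $p$. Under the failure of the conclusion, every standard extension $p\le p_0$ must contain infinitely many bad free positions, for otherwise one could append the finitely many bad positions to $\dom(p)$ to obtain the desired condition.

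I would carry out a fusion $p_0\ge p_1\ge\cdots$ processing interval $H_{k_n}$ at stage $n$. Within stage $n$, go through the free positions of $H_{k_n}$ one at a time: for each $i$ currently bad, append $(i,0)$ to the condition (legalizing inside $\Poset_0$); for each $i$ currently good, leave it free. By preservation of goodness, positions do not revert from good to bad in the fusion, so the free positions surviving in $H_{k_n}$ are all good at the final condition. The key swap principle that drives the extraction of decisions on $\dot h$ is this: if $p$ is bad at $i$, then $p\cup\{(i,0)\}$ and $p\cup\{(i,1)\}$ never force distinct values on any $\dot h(t)$, so a decision $q\Vdash\dot h(t)=v$ by $q\le p$ using an assignment at the bad $i$ can be transferred via a bit-flip at $i$ to another extension still forcing $\dot h(t)=v$.

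The main obstacle is managing the tension between the growth condition on $p^*$ and the fusion's need to eliminate bad free positions. If only finitely many $H_k$ end up fully filled by the fusion, then $p^*$ lies in $\Poset_0$ with all free positions good, directly providing the desired $p$ and contradicting our assumption. If infinitely many $H_k$ get fully filled, one uses the swap principle iteratively to select, for each $n\in\mathbb N$, a canonical value of $\dot h(n)$ forced within the accumulated bit-assignments in the filled intervals; bookkeeping these decisions produces a $\Poset_0$ extension of $p_0$ forcing $\dot h$ to equal the corresponding ground-model function, contradicting the hypothesis. Either horn yields a contradiction, proving the lemma.
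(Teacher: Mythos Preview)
Your contrapositive framing and the observation that goodness is preserved under extension are fine, but the core of the argument does not go through.

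The swap principle is false as stated. Badness of $p$ at $i$ says only that $p\cup\{(i,0)\}$ and $p\cup\{(i,1)\}$ never \emph{both} force values that disagree; in $\Poset_0$ the condition $p\cup\{(i,1)\}$ fills all of $H_k$ and hence (via Lemma~\ref{preserve2}) decides $\dot h(t)$ for every relevant $t$, so badness typically means just that $p\cup\{(i,0)\}$ fails to decide. It does \emph{not} follow that a deeper extension $q\le p$ with $q(i)=0$ can have bit $i$ flipped while preserving the value $q$ forces. For a concrete failure take $H_k\setminus\dom(p)=\{i,j\}$ and suppose the three total local extensions force pairwise distinct values on some $\dot h(t)$: the $(i\mapsto1)$-extension forces $a$, the $(j\mapsto1)$-extension forces $b$, and the all-zero extension forces $c$. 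Then $p$ is bad at $i$ (since $p\cup\{(i,0)\}$ has incompatible extensions forcing $b$ and $c$, so forces nothing), yet $q=p\cup\{(i,0),(j,0)\}$ forces $c$ while the ``flipped'' condition $p\cup\{(i,1)\}$ forces $a\ne c$. So your mechanism for Case~2 does not extract a ground-model value of $\dot h$. A correct local analysis in fact shows that your processing fills $H_k$ entirely only when every $(i,1)$-extension agrees with the all-zero extension, i.e.\ when $p_0$ \emph{already} decides $\dot h$ on $[n_k,n_{k+1})$; this tells you nothing about the remaining intervals, so no condition deciding all of $\dot h$ emerges.

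Case~1 has a separate gap: even if cofinitely many $H_k$ retain a free (good) position after processing, you have not arranged that the \emph{number} of surviving free positions diverges with $k$, so the fusion limit need not satisfy the growth condition and may fail to be a standard condition (or a condition at all). For instance, if in each $H_k$ exactly one of the $(i,1)$-extensions disagrees with the all-zero value, your processing leaves exactly one free position per interval regardless of $|H_k|$. The paper's proof avoids both problems by working in the opposite direction: rather than eliminating bad positions it manufactures $\ell$ good ones in $H_k$ at stage $\ell$, using the hypothesis to locate an undecided $t_{k,\ell}$, exploiting that two distinct $(i,1)$-extensions can force different values on $\dot h(t_{k,\ell})$, and invoking Lemma~\ref{grabY} to pin down the $u$-value when the new position is set to $0$. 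A second fusion then arranges that $p\cup\{(i,0)\}$ alone (not merely a longer string of zeros) already forces $u_{k,j}$.
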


\begin{proof}
We proceed by a simple recursion. By induction on $\ell$, suppose we
have chosen $p_\ell$  together with a family 
$\{ i(k,j) : j< \ell\} \subset H_k\setminus \dom(p_\ell)$ for all
$k\in K(p_\ell)$. 
 We assume  that for each $j< \ell$ and $k\in K(p_\ell)$, there is a value
  $t_{k,j}$ so that $p_{j+1}\cup \{ (i(k,j'), 0) : j'\leq j\} $ 
  and $p_{j+1}\cup \{ (i(k,j'), 0) : j' < j\}\cup \{ ( i(k,j), 1)\} $
  force distinct values, 
   $u_{k,j}, v_{k,j}$, on $\dot h(t_{k,j})$.   
As usual in such a fusion, we assume that $p_{j+1} \restriction
n_{m_j} \subset p_j$ 
so that we will have that $\bigcup_{\ell} p_\ell$ is a condition.
   Now we may choose a sequence
    $\langle t_{k,\ell} : k\in K\rangle$ (for some infinite $K\subset K(p_\ell)
    $) 
    such that, for each $k\in K$, $t_{k,\ell} \in [n_k, n_{k+1})$ and
     $p_\ell\cup \{ (i(k,j) ,0) : j<\ell\}$ does not force a value on $\dot h(t_{k,\ell})$. 
For each $k\in K$, there are two values $\bar \i^k_0, \bar \i^k_1$     from
 $H_k\setminus \left(\dom(p_\ell)\cup \{ i(k,j) : j< \ell \}\right)$, 
such that $p_\ell\cup \{(\bar\i^k_0,1)\}$ and $p_\ell\cup \{(\bar \i^k_1,1)\}$ force
distinct values, $v^k_0,v^k_1$, on $\dot h(t_{k,\ell})$. 
Using Lemma \ref{grabY}, choose $\bar p_{\ell+1} < p_\ell$ such
that 
for all $k\in K(\bar  p_{\ell+1}) \subset K$,
$\{ i(k,j) : j< \ell\}$ is disjoint from $\dom(\bar p_{\ell+1})$ 
 and $\bar p_{\ell+1}\cup \{ (i(k,j) ,0) : j<\ell\}$ forces a value, $u_{k,\ell}$, 
 on  $\dot h(t_{k,\ell})$. Suppose, without loss of generality, that
 $v^k_1 \neq u_{k,\ell}$ 
 and let $i_{k,\ell} = \bar\i^k_1$. It follows that $i_{k,\ell}\in
 \dom(\bar p_{\ell+1})$  
 and so define $p_{\ell+1}$ to be the condition we get by removing  $i_{k,\ell}$
 from the domain of $\bar p_{\ell+1}$ for all $k\in K=K(p_{\ell+1}) $.
 
 When the recursion is finished, we choose any increasing sequence 
  $\{ k_\ell : \ell \in \omega\}$ so that $k_\ell \in K(p_{\ell+1})$, and $p<p_0$
  any condition so that $K(p) = \{ k_\ell : \ell \in \omega\}$, and $H_{k_\ell} \setminus
  \dom(p)
  =
     \{ i(k_\ell,j) : j< \ell\}$. Of course this implies that $p$ is constantly 0
      on each $H_{k_\ell}\cap \dom(p)$. For each $k=k_\ell$ and $j< \ell$, 
        we have that $p\cup \{ (i(k,j), 1)\}$ forces the value $v_{k,j}$ on $\dot h(t_{k,j})$
    because of Lemma \ref{preserve2}.    
    And similarly, since $p_{\ell+1}\restriction H_{k}  \subset p$, we have
    that $p\cup \{ (i(k,j'), 0) : j'\leq j\}$ forces that $\dot h(t_{k,j}) = u_{k,j}$. 
Because of this, we have that if $q< p$ is such that $k\in K(q)$ and $q$ forces a value
  on $\dot h(t_{k,j})$, then this value has to be $u_{k,j}$. We finish the construction by
   another more routine recursion. There should be no risk of confusion
   if we re-use the notation $p_1,p_2$ etc. for the values in this new recursion.   
    For each $k\in K(p)$, let $j_{k,0}$ denote the largest value
   so that $i(k,j_{k,0})\notin \dom(p)$.  By Lemma \ref{grabY}, there
   is  a condition $p_1<p$ so that $p_1$ forces
   a value on $\dot h\restriction \{ t_{k,j_{k,0}} : k\in K(p)\}$. Again, as discussed above, 
   we have that $p_1$ forces that $\dot h(t_{k,j_{k,0}}) = u_{k,j_{k,0}}$ for each $k\in K(p_1)$.
   There is an infinite set $K_1\subset K(p_1)$ such that there is a largest $j_{k,1} < j_{k,0}$ such that $i(k,j_{k,1})\notin \dom(p_1)$.
    Find a condition $p_2<p_1$ which forces a value on $\dot h\restriction 
     \{ t_{k,j_{k,1}}  : k \in K_1\}$. Continue this induction. Again there is a sequence 
     $\{ k_\ell : \ell \in \omega \}$ such that $j_{k,\ell}$ was successfully chosen for $k=k_{\ell+1}$. 
     We extend $p$ to a condition $p'$ so that $K(p') = 
     \{ k_\ell : \ell\in \omega\}$ and $H_{k_\ell} \setminus
      \dom(p')$ is equal to $\{ i(k_\ell, j_{k_\ell,m}) : m < \ell\}$. We still have that
       $p'\cup \{(i,1)\} \Vdash\dot h(t_{k,i}) = v_{k,i}$ for each $i\in H_k\setminus \dom(p')$, but
       we can now show that $p'\cup \{ (i,0)\}$ forces that $\dot h(t_{k,i})= u_{k,i}$.
       The simplest way to do this is to consider any  $i'\in H_k\setminus 
       \dom(p')$ with $i'\neq i$. If $i'<i$, then the condition $p'\cup \{ (i',1)\}$ is compatible
       with  $p_{m+1}\restriction H_k$ 
       where $m$ is chosen so that $i = i(k,j_{k,m})$. On the other hand
       if $i'>i$, then $p'\cup \{ (i',1)\}$ is compatible with $p\cup \{(i(k,j),0) : j\leq j_{k,m} \}$.
       Since each of these force that $\dot h(t_{k,i}) = u_{k,i}$, we have that $p'\cup \{ (i,0)\}$
       forces that $\dot h(t_{k,i}) = u_{k,i}$.       
\end{proof}

\begin{theorem}
In the extension obtained by forcing over a model of PFA
by\label{repeatnoauto}
   $\Poset_0$ 
all automorphisms on $\pomegaf$ are trivial.  
\end{theorem}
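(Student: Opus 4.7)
\emph{Proof plan.} I will proceed by contradiction, assuming $F$ is a non-trivial automorphism of $\pomegaf$ in the $\Poset_0$-extension $V[\mathfrak F]$. I keep the setup from the previous section: the condition $\bar q$, the increasing sequence $n_0<n_1<\cdots$, the blocks $H_k=[2^{m_k},2^{m_k+1})\setminus\dom(\bar q)$, and the $\Poset(\mathfrak F)$-name $\dot h$ that is forced to mod-finite extend every $h_q$ for $q\in\mathfrak F$.

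My first move will be to verify the hypothesis of Lemma \ref{p0trivial} for every standard $p_0<\bar q$: no extension of $p_0$ in $\Poset(\mathfrak F)$ can decide $\dot h(t)$ for all $t$. If some extension $p^*$ did, then $p^*\Vdash \dot h = g$ for some $g\in V[H]$; combined with Lemma \ref{sigmaBorel} and the $\Poset_0$-definability of $F$, this would produce a function in $V[\mathfrak F]$ inducing $F$, contradicting non-triviality. Here the genericity of $\mathfrak F$ over $V$ is what lets me push the putative witness down from $V[H]$ to $V[\mathfrak F]$.

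Next I will apply Lemma \ref{p0trivial} to a standard $p_0\in\mathfrak F$ below $\bar q$ to extract a splitting condition $p<p_0$, which (by a density argument) I may arrange to lie in $\mathfrak F$, with the property that for each $i\in\mathbb N\setminus\dom(p)$ there are a target $t_i$ and distinct values $u_i\neq v_i$ such that $p\cup\{(i,0)\}\Vdash \dot h(t_i)=u_i$ and $p\cup\{(i,1)\}\Vdash \dot h(t_i)=v_i$. Because $\mathfrak F$ is $\Poset_0$-generic over $V$ and $p\in\mathfrak F$, exactly one of the two one-bit extensions is compatible with $\mathfrak F$ for each $i$, so I may define, inside $V[\mathfrak F]$, a function $h^*\colon T\to\mathbb N$ on $T=\{t_i:i\in\mathbb N\setminus\dom(p)\}$ by $h^*(t_i)=u_i$ or $v_i$ accordingly. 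By the splitting, $h^*$ agrees mod finite with the valuation of $\dot h$ on $T$, so it induces $F$ on $T$, placing $T\in\triv(F)$ from the viewpoint of $V[\mathfrak F]$.

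The final and most delicate step is to upgrade this local triviality witness into a global contradiction. By re-running the splitting argument below arbitrary conditions, one obtains a rich family of triviality witnesses $(T_\alpha,h^*_\alpha)$. Using that $\triv(F)$ is a dense P-ideal (Lemma \ref{Pideal}) together with the ccc-over-fin conclusion of Theorem \ref{repeatcccfin}, I expect to amalgamate these local witnesses into a single function of $V[\mathfrak F]$ inducing $F$ on all of $\mathbb N$, contradicting non-triviality. The main obstacle will be to carry out this amalgamation entirely inside $V[\mathfrak F]$, and to leverage the distinguishing feature of $\Poset_0$ — that conditions are $\{0,1\}$-valued, so each branching introduces exactly one new generic bit — to ensure that the locally defined $h^*_\alpha$ are compatible modulo finite on their overlaps. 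This is precisely the point where $\Poset_1$ and $\Poset_2$ would fail to give the same conclusion, since their richer condition structure destroys the tight one-bit splittings supplied by Lemma \ref{p0trivial}.
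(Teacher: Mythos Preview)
Your step 3 contains a genuine error that collapses the argument. The filter $\mathfrak F\subset\Poset_0$ is generic for the order $\subseteq^*$, so $p$, $p\cup\{(i,0)\}$, and $p\cup\{(i,1)\}$ are all almost equal and hence indistinguishable from the point of view of $\mathfrak F$: either all three lie in $\mathfrak F$ or none do. Thus $\mathfrak F$ does \emph{not} select a bit for each $i\notin\dom(p)$, and there is no function $h^*\in V[\mathfrak F]$ defined by your rule. The name $\dot h$ is a $\Poset(\mathfrak F)$-name (for the order $\supseteq$, where the two one-bit extensions genuinely are incompatible), and it acquires a value only once a $\Poset(\mathfrak F)$-generic $G$ is fixed; but $G$ lives in $V[H][G]$, not in $V[\mathfrak F]$. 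So the conclusion ``$T\in\triv(F)$ in $V[\mathfrak F]$'' is unsupported, and step 4---already only a sketch---has nothing to amalgamate.

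The paper's proof is quite different and much more direct: it does not try to build a triviality witness at all, but instead uses the splitting data $(t_i,u_i,v_i)$ to manufacture a contradiction inside $\Poset(\mathfrak F)$. One takes $Y$ with $F(Y)=\{u_i:i\notin\dom(p)\}$, passes to $q<p$, and does a short case analysis. If infinitely many $t_i$ (for $i\notin\dom(q)$) miss $Y$, then setting those bits to $0$ forces $\dot h(t_i)=u_i\in F(Y)$ while $t_i\notin Y$, contradicting that $F$ is a homomorphism. If instead almost all $t_i\in Y$ but infinitely many $v_i\notin F(Y)$, the symmetric argument with bits set to $1$ works. The remaining case produces, in infinitely many blocks $H_k$, distinct $i_k,i'_k$ with $v_{i_k}=u_{i'_k}$; here the $\Poset_0$-specific constraint (at most one $1$ per block) means that setting $(i_k,1)$ automatically sets $(i'_k,0)$, whence $\dot h(t_{i_k})=v_{i_k}=u_{i'_k}=\dot h(t_{i'_k})$ and $\dot h$ fails to be one-to-one. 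This last case, not any amalgamation of local witnesses, is where the special structure of $\Poset_0$ enters.
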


\begin{proof}
  Fix a condition $p$ as in Lemma \ref{p0trivial}
    satisfying that 
 for  each $i\notin \dom(p)$ there is  a $t_i$
such that there are distinct values
 $u_i$,  $v_i$ 
that $p\cup\{(i,0)\}$ and
$p\cup \{(i,1)\}$, respectively, force on $\dot h(t_i)$. 
We prove that this leads to a contradiction.
Choose a condition
$q<p$ and a set $Y$ so that $q$ forces that 
 $F(Y) = \{ u_i : i\notin \dom(p)\}$.
 Let $L_0 = \{ i\notin \dom(q) : t_i \notin Y\}$.
If $L_0$ is infinite, then we have a contradiction by 
choosing any 
infinite subset $L'$ of $L_0$ so 
that $L'\cap H_k$ has at most one element for each $k$, 
and considering 
the condition $q'= q\cup \{ (i,0) : i \in L' \}$. 
We now have that $q'$ forces that   $F(\{t_i : i\in  L' \}) =^* h_{q'}(\{t_i : i\in L'\})$ 
will be almost contained in  $F(Y)$ while $\{t_i : i\in L' \}$
 is disjoint from $Y$.

Now suppose that $L_0$ is finite. If $\{ v_i : i\notin \dom(q) \}
\setminus F(Y)$ is infinite, then we choose an infinite $L'$
so that $L'\cap H_k$ is empty for infinitely many $k\in K(q)$
 and so that $\{ v_i : i\in L'\}$ is disjoint from $F(Y)$.
Again the extension $q' = q \cup \{ (i,1) : i\in L'\}$  will force
that $F(\{t_i : i\in L'\}) =^* h_{q'} (\{t_i : i\in L'\})$, but this
contradicts that it is supposed
to be mod finite contained in $F(Y)$. 

 The final case then is that there is
an infinite sequence $L'\subset K(q)$ such
that $K(q)\setminus L'$ is still infinite and there is a sequence 
of pairs $\{ i_k , i_k' : k\in L'\}$
   such that 
 $i_k, i_k'$ are distinct members of  $H_k\setminus \dom(q)$ and 
 $v_{i_k}   = u_{i_k'} $ for each $k\in L'$. 
 Now we have that the extension $q' 
= q\cup \{ (i_k,1)  : k\in L'\}
\supset q\cup \{ (i_k,1), (i'_k,0) : k\in L'\}$  will force that $\dot h$ 
is not 1-to-1.
\end{proof}

\section{More properties of the poset $\Poset_1$}

\begin{theorem} In a model obtained by forcing with the\label{ramsey}
 poset $\Poset_1$ over a model of PFA,
there is a non-trivial autohomeomorphism $\varphi$ of
$\mathbb N^*$ and two regular closed copies $A,B$ of
$\mathbb N^*$ and a tie-point $\mathcal W$ such that 
\begin{enumerate}
\item $\varphi[A]=B$ and $\varphi[B] =A$, and $A\cap B=\{\mathcal W\}$,
\item $\mathcal W$ is the only point on the boundary of each of $A$ and $B$,
\item $\varphi$ is the identity on $\mathbb N^*\setminus (A\cup B)$.
\end{enumerate}
\end{theorem}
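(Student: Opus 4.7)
The strategy is to exploit the natural $\mathbb Z/2$-symmetry of $\Poset_1$ under the involution $\iota : f \mapsto 1-f$, which swaps the generators $f^{-1}(1)$ of $\mathcal I_A$ with the generators $f^{-1}(0)$ of $\mathcal I_B$. This symmetry is precisely what is destroyed by the ``at most one $1$ per block'' restriction of $\Poset_0$, a restriction that was the linchpin of Lemma~\ref{p0trivial} and Theorem~\ref{repeatnoauto}; relaxing it in $\Poset_1$ should therefore open the door to a genuine swap $\varphi$ of the two sides of the generic tie-point.

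Concretely, I would first verify that both $A$ and $B$, realized in $V[H]$ via $\mathcal I_A = \langle f^{-1}(1) : f\in G\rangle$ and $\mathcal I_B = \langle f^{-1}(0) : f\in G\rangle$, are regular closed copies of $\mathbb N^*$. Both ideals are dense $P$-ideals (Lemma~\ref{Pideal}) and ccc over fin (Theorem~\ref{cccfin}), and the PFA analysis of Farah~\cite{Farah00}, adapted as in the proof of Theorem~1.1, identifies each of $\pomega/\mathcal I_A$ and $\pomega/\mathcal I_B$ with $\pomegaf$. With these copies in hand I would construct $\varphi$ by lifting $\iota$ through the generic: using PFA in $V$ together with the preparatory forcing $2^{<\omega_1}$ and the auxiliary poset $\Poset(\mathfrak F)$ of Section~3, select a filter $\mathfrak F$ and produce a coherent sequence of block-wise partial bijections $\pi_k : f^{-1}(1)\cap H_k \to f^{-1}(0)\cap H_k$ that agree mod finite with $\iota$ for $\omega_1$-many cofinal $f\in\mathfrak F$, glued into an autohomeomorphism $\varphi$ by a standard PFA-density argument. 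Properties (1)--(3) then follow from the construction: $\varphi$ swaps $\mathcal I_A$ and $\mathcal I_B$ so $\varphi[A]=B$; it fixes $\mathcal W$ because each $\pi_k$ lives inside $\dom(f)$ while $\mathcal W$ is generated by $\{\mathbb N\setminus \dom(f) : f\in G\}$; and on $\mathbb N^*\setminus(A\cup B)$ it is the identity by design.

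The main obstacle is non-triviality. If $\varphi$ were induced by some single $h\in \mathbb N^{\mathbb N}$ in $V[H]$, then $h$ would have to satisfy $h[f^{-1}(1)] =^* f^{-1}(0)$ for every $f\in G$. I would refute this via a reflection argument of the kind developed in Section~4: extract a $\Poset(\mathfrak F)$-name $\dot h$ via Lemma~\ref{mainlemma}, apply Lemmas~\ref{most-useful-lemma}--\ref{preserve2} to obtain a standard condition $\bar q$ and a sequence $\langle n_k\rangle$ controlling $\dot h$ block-by-block, and then use the block-independent $\{0,1\}$-freedom of $\Poset_1$-conditions (the very freedom unavailable in $\Poset_0$) to produce an extension forcing $\dot h$ to disagree with the required swap on some $H_k$. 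The delicate coordination is between the existence side (the PFA gluing must meet enough dense sets to stitch the $\pi_k$ into a continuous involution fixing $\mathcal W$) and the non-existence side (the fusion must be robust enough to defeat every candidate $h$ in $V[H]$); balancing these two demands, so that the very genericity used to build $\varphi$ is what rules out its triviality, is where the bulk of the technical work of the section is expected to go.
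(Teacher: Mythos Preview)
Your proposal rests on two claims that are not established and, as far as the paper goes, are not true in the form you need.

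First, you assert that the natural halves of the $\Poset_1$ tie-point, given by $\mathcal I_A=\langle f^{-1}(1):f\in G\rangle$ and $\mathcal I_B=\langle f^{-1}(0):f\in G\rangle$, are regular closed copies of $\mathbb N^*$, citing Lemma~\ref{Pideal} and Theorem~\ref{cccfin}. Those results are about $\triv(\Phi)$ for an automorphism $\Phi$; they say nothing about the tie-point ideals $\mathcal I_A,\mathcal I_B$, and in any case ``dense $P$-ideal, ccc over fin'' is far weaker than ``the associated regular closed set is homeomorphic to $\mathbb N^*$''. The only place the paper proves such a homeomorphism for a natural half is for $\Poset_0$, and that argument hinges precisely on the one-$1$-per-block restriction. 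For $\Poset_1$ no such result is available, which is why the paper does \emph{not} use the natural tie-point $\mathcal U$ here at all.

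Second, the poset involution $\iota:f\mapsto 1-f$ is an automorphism of $\Poset_1$, but it does not fix the generic filter $G$, so it does not hand you an autohomeomorphism of $\mathbb N^*$ in $V[G]$. Your suggested repair, gluing block-wise bijections $\pi_k:f^{-1}(1)\cap H_k\to f^{-1}(0)\cap H_k$, fails immediately because for a generic $f$ these two finite sets have different cardinalities. The later appeal to Lemmas~\ref{most-useful-lemma}--\ref{preserve2} to prove non-triviality is circular: those lemmas analyse a given automorphism, they do not help you build one.

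The paper's route is entirely different and does not use the symmetry $\iota$ or the natural tie-point $\mathcal U$. It manufactures a \emph{new} tie-point $\mathcal W$ by defining $\Poset_1$-names $\dot t_m$ for pairs $a_\sigma\subset[2^m,2^{m+1})$, where $a_\sigma$ depends only on $|\sigma^{-1}(1)|$. The halves are $A(p)=\{\min t_m\}$, $B(p)=\{\max t_m\}$, and the projection $[2^m,2^{m+1})\mapsto m$ restricts to a homeomorphism on each of $A,B$; the swap $A\leftrightarrow B$ is then automatic. The entire technical content is a recursive Ramsey-type construction of ``$\ell$-structures'' (indexed by products $n_\ell\times\cdots\times n_0$ with $n_\ell=L_\ell+L_\ell^{L_\ell}$) whose combinatorics guarantee that the filter $\mathcal W$ generated by the sets $W(p)=\bigcup\{a_\sigma:\sigma\cup p\in\Poset_1\}$ is in fact an ultrafilter. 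None of this machinery appears in your outline, and it is the heart of the proof.
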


\begin{proof}
We will define a strange sequence, $\{ \dot t_m : m\in \omega\}$, of 
$\Poset_1$-names of pairs. These will code liftings of the maps
between $A$ and $B$ (each will ``pick'' a point from the pair) and the
mappings of each onto $\mathbb N^*$ (each member from the $m$-th pair
being sent to $m$).  The difficult part of the construction is to ensure that $A$
and $B$ meet in a single ultrafilter. 

 For each $m\in \omega$ and each function $\sigma\in 2^{[2^m,2^{m+1})}$, we
 will choose a pair $a_\sigma \subset [2^m, 2^{m+1})$. The definition of
 $\dot t_m$ will simply be that a condition $p\in \Poset_1$ such that 
$[2^m,2^{m+1})\subset \dom(p)$, will force that $\dot t_m$ is equal to
  $a_{p\restriction [2^m,2^{m+1})}$. Analogous to the definition of
    $K(p)$ above, let  $M(p)$ denote the set $
 \{ m\in\omega :
    [2^m,2^{m+1})\not\subset \dom(p)\}$ for each $p\in
    \Poset_1$. Without mention, we will assume that we work with the
    dense set of conditions which satisfy that $\{
    |[2^{m},2^{m+1})\setminus \dom(p) | : m\in M(p)\}$ diverges to
    infinity. 

For each $p\in \Poset_1$, let $T(p) = \{ t_m : m\notin M(p) \mbox{\
  and\ } p\Vdash t_m = \dot t_m\}$, $A(p) = \{ \min(t_m) : t_m\in
T(p)\}$ 
and $B(p) = \{ \max(t_m) : t_m \in T(p)\}$. If $G$ is a generic filter
on $\Poset_1$, then we will set $A$ to be the closure of the open set
 $\bigcup \{ A(p)^* : p\in G\}$ and $B$ to be the closure of the open
 set
 $\bigcup\{ B(p)^* : p\in G\}$. 

For each $p\in \Poset_1$, let $W(p)$ equal
$\bigcup_{m\in M(p)} \{ a_\sigma : \sigma\cup p \in \Poset_1\}$.
 Now define the filter $\mathcal W$ to
 be the filter generated by the 
family $\{ W(p)  : p\in G \}$. 
If we define these names so that  $\mathcal W$ is forced to be an
ultrafilter, then it is quite routine to check that $\mathcal W$ is
the only boundary point of each of $A$ and $B$ and that the map
sending $\mathbb N^*$ onto $\mathbb N^*$ obtained by extending the map
sending each interval $[2^m,2^{m+1})$ to $\{ m\}$ will restrict to a
homeomorphism on each of $A$ and $B$. 
This implies that there is a homeomorphisms from $A$ onto $B$
that sends $\mathcal W$ to itself. It should then be clear
that there is a homeomorphism $\varphi$ as in the statement
of the theorem. 

Now we set about showing that there is such a sequence of names. We
will define, for $m\in \omega$ and $\sigma \in 2^{[2^m,2^{m+1})}$, the
value of $a_\sigma$ based only on the cardinality of
$\sigma^{-1}(1)$. In fact some Ramsey theory says this will
effectively be the
case anyway.  To make these choices we now introduce the idea of an
$\ell$-structure for $\ell \in \omega$. 

The $0$-structure will be the
empty set. We let $L_0=2$ and $n_0=6 = L_0+ L_0^{L_0}$. 
We next define  a family of pairs
$\{ a_{\langle i\rangle} : i < n_0\}$:
$$\begin{array}{ccc}
a_{\langle 0\rangle} = \{0,1\},& a_{\langle 1\rangle} = \{2,3\}, &
a_{\langle 2\rangle}= \{0,2\},\\
 a_{\langle 3\rangle} = \{ 0,3\},&
a_{\langle 4\rangle}= \{1,2\},& a_{\langle 5\rangle} = \{ 1,3\}
\end{array}
$$
This assignment satisfies that for each set $Y$ such that
 $Y\cap a_{\langle i\rangle}$ is not empty for each $i<L_0$, there
is a $j<n_0$ such that $Y\supset a_{\langle j\rangle}$. This is the process by
which we will ensure that the above defined $\mathcal W$ is an
ultrafilter. 

By recursion on $\ell$, we define 
  $L_{\ell} = 2\,n_0\,n_1 \cdots n_{\ell-1} $, set
 $n_{\ell} = L_\ell + L_\ell^{L_\ell}$, and we define
 our $\ell$-structure based on the cartesian product
$$\mathcal N_\ell = 
n_{\ell}\times n_{\ell-1}\times\cdots \times n_1\times n_0~~.$$ 
It is awkward, but ultimately more convenient, to have this product in
descending order.  For each $j< \ell$, also let 
$\mathcal N_{\ell,j} = n_{\ell}\times \cdots \times n_{j}$.

 An $\ell$-structure is a family 
$\langle \{ a_x : x\in \mathcal N_\ell \}, 
 \{ Y_\rho : \rho \in \bigcup_{j<\ell} \mathcal N_{\ell,j}\}\rangle $ 
satisfying
\begin{enumerate}   
\item for each $x\in \mathcal N_\ell$, $a_x$ is a pair of integers
\item for each $\rho\in \bigcup_{j<\ell}\mathcal N_{\ell,j}$, \ 
 $Y_\rho$ is the union of all
  $a_x$ with $x\in  \mathcal N_\ell$ and $\rho\subset x$,
\item for each $j<\ell-1$ and  $\rho\in \mathcal N_{\ell,j}$, the
  family
   $\langle \{ a_x : \rho\subset x\in \mathcal N_{\ell}\}, 
  \{ Y_\psi : \rho\subset \psi \in \bigcup_{i<\ell} \mathcal
  N_{\ell,i}\}\rangle$ is an $(\ell-j)$-structure (with a confusing
  prefix of $\rho$ on each index),
\item the family $\{ Y_{\langle m\rangle } : m<L_\ell \}$ are pairwise
  disjoint and $Y_\emptyset$ is the union,
\item for each $Y\subset Y_\emptyset$ such that $Y\cap Y_{\langle
    m\rangle}\neq\emptyset $ for each $m< L_\ell$, there is a
  $k<n_\ell$
 such that $Y\supset Y_{\langle k\rangle}$. 
\end{enumerate}

The construction is quite straightforward. Let $\{\bar a_x : x\in
\mathcal N_{\ell{-}1} \}$ be the pairs from an $\ell{-}1$-structure. The
definition of $L_\ell$ ensures that it exceeds 
 the cardinality of, $\bar Y_\emptyset$, the union of these pairs. 
Let $\{ Y_{\langle m\rangle} : m < L_\ell\}$ be a pairwise disjoint
family of sets of integers each of the same cardinality as
 $\bar Y_\emptyset$. Similarly, let $\{ Y_{\langle k\rangle} : m\leq
 k<n_\ell\}$ be a family of sets, each of cardinality $|\bar
 Y_\emptyset|$, so that the last inductive assumption is satisfied. 
For each $k<n_\ell$, fix a bijection, $f_k$, 
 between $\bar Y_\emptyset$ and
$Y_{\langle k\rangle}$ and define $a_{k^\frown \bar x} = f_k[ \bar a_{\bar
  x}]$ for each $\bar x \in \mathcal N_{\ell{-}1}$.

For each $\ell$, let $c_\ell$ denote the order-preserving mapping from
$\mathcal N_\ell$ with the natural lexicographic ordering into an
initial segment of $[0,L_{\ell+1})$. It is important to observe that
for each $\rho\in \bigcup_{j<\ell} \mathcal N_{\ell,j}$, the set
 $[\rho] = \{ x\in \mathcal N_{\ell} : \rho\subset x\}$ is an interval
 in the lexicographic ordering, and so, $c_\ell([\rho])$ is an
 interval in $[0,L_{\ell+1})$.  We may also assume that we have, 
 for each $\ell$, a fixed $\ell$-structure so that the set
 $Y^\ell_\emptyset = Y_\emptyset$ from this structure is an initial
 segment of integers.  

For each integer $m$, choose $\ell=\ell_m$ maximal 
so that $L_{\ell_m+1}\leq 2^m$ (hence the interval $[2^m,2^{m+1})$
will support an $\ell$-structure). For each $\sigma\in
2^{[2^m,2^{m+1})}$ such that there is an $x\in \mathcal N_{\ell}$ with 
$c_\ell(x) =|\sigma^{-1}(1)| $, define $a_\sigma$ to be the pair obtained
by adding $2^m$ to each member of $a_x$. It follows that
$a_\sigma\subset [2^m, 2^{m+1})$. If there is no such $x\in \mathcal
N_\ell$,  let $a_\sigma = \{ 2^m, 2^m{+}1\}$. 

Define the condition $p_0\in \Poset_1$ by the prescription that for
all $m$ and $\ell = \ell_m$, 
 $p_0\restriction[2^m, 2^{m+1})$ is the partial function which
is $0$ on the segment $[2^m+|Y^\ell_\emptyset|, 2^{m+1})$. This
ensures that for all $\sigma\in 2^{[2^m,2^{m+1})}$ which extend
 $p_0\restriction [2^m,2^{m+1})$, there will be an $x\in \mathcal
 N_{\ell}$ such that $c_\ell(x) = |\sigma^{-1}(1)|$. 

To finish the proof, we prove that $p_0$ forces that $\mathcal W$ is
an ultrafilter. That is, if $q<p_0$ and $Y\subset \mathbb N$, then
there is a $p<q$ such that $Y$ either contains, or is disjoint from,
 $W(p)$. 

We may assume that the sequence $\{ k_m = 
|[2^m,2^{m+1})\setminus \dom(q) | : m \in M(q)\}$ diverges to
infinity. For each $m\in M(q)$, let $q_m = q\restriction
 [2^m,2^{m+1})$. 
Also, for $m\in M(q)$, let $i_m$ be the largest integer so
that $n_{i_m} < k_m/3$. By thinning out further (using any extension
of $q$), we can also assume
that  $\{ i_m : m\in M(q)\}$ diverges to infinity. It then follows
that for each $m\in M(q)$ there is a $\rho_m\in \mathcal
N_{\ell_m,i_m}$ such that the interval $c_{\ell_m}[\rho_m]$ is
contained in $[|q_m^{-1}(1)| , |q_m^{-1}(1)| +k_m)$. By inductive
hypotheses (3) and (5) in the definition of an $\ell_m$-structure, 
 there is an extension $\psi_m$ of $\rho_m$ with $\psi_m\in
\mathcal N_{\ell_m, i_m+1}$ such that $Y$ either 
contains, or is disjoint from, $W_m = 2^m+ Y^{\ell_m}_{\psi_m}$ (i.e. 
$W_m = \bigcup\{ a_\sigma : \sigma \in 2^{[2^m,2^{m+1})} \ \mbox{and}\ 
 |\sigma^{-1}(1)| \in c_{\ell_m}[\psi_m]\}$). By symmetry, we may
 assume that there is an infinite $K\subset M(q)$ such that 
  $Y$ contains $W_m $ for all $m\in K$. Let $p<q$ be
any  condition such that $p^{-1}(0) = q^{-1}(0)$ (no more 0's are
added) 
and  for each $m\in K$, 
 the minimum element of $c_{\ell_m}[\psi_m]$ is the number of values
 in $[2^m,2^{m+1})$ which are sent to 1 by $p$. Notice that for
 $m\in K$, $|[2^m,2^{m+1})\setminus \dom(p)| > n_{i_m+2}$ and so we
may assume that $M(p)=K$. 
In other words, $p$ 
will satisfy that
  $W(p) = \bigcup_{m\in M(p)}W_m$. This shows that $p$ forces that 
 $Y$ contains a member of $\mathcal W$. 
\end{proof}

\let\MR\relax

\end{document}